\documentclass[12pt]{amsart}
\usepackage{times}
\usepackage{amsmath}
\usepackage{amsthm}
\usepackage{tikz}
\usetikzlibrary{matrix,arrows}
\newcommand{\amalmon}[1]{\star_{#1}}
\newcommand{\amal}[1]{\ast_{#1}}
\newcommand{\isom}{\cong}
\newcommand{\bC}{\mathbb{C}}
\newcommand{\bN}{\mathbb{N}}
\newcommand{\bZ}{\mathbb{Z}}
\newcommand{\sA}{\mathcal{A}}

\newcommand{\sD}{\mathcal{D}}
\newcommand{\sE}{\mathcal{E}}
\newcommand{\sH}{\mathcal{H}}
\newcommand{\sJ}{\mathcal{J}}
\newcommand{\sP}{\mathcal{P}}
\newcommand{\sL}{\mathcal{L}}

\newcommand{\sR}{\mathcal{R}}
\newcommand{\sS}{\mathcal{S}}
\newcommand{\sT}{\mathcal{T}}

\newcommand{\PI}{\mathrm{PI}}

\newcommand{\cstar}{C^*}

\theoremstyle{plain}
\newtheorem{thm}{Theorem}
\newtheorem{prop}[thm]{Proposition}
\newtheorem{lemma}[thm]{Lemma}

\theoremstyle{remark}

\newtheorem{example}[thm]{Example}

\begin{document}
\title{Amalgams of Inverse Semigroups and C$^*$-algebras}
\author{Allan P. Donsig}
\address{Mathematics Department, University of Nebraska-Lincoln, Lincoln, NE, 68588-0130}
\email{adonsig1@unl.edu}
\author{Steven P. Haataja}
\author{John C. Meakin}
\address{Mathematics Department, University of Nebraska-Lincoln, Lincoln, NE, 68588-0130}
\email{jmeakin@math.unl.edu}
\thanks{This paper is based, in part, on the doctoral dissertation of the second author,
who was advised by the other two authors.  The second author is deceased.}
\subjclass[2010]{46L09, 20M20}
\keywords{$C^*$-algebra, inverse semigroup, amalgamated free product}

\begin{abstract}
An amalgam of inverse semigroups $[S,T,U]$ is full if
$U$ contains all of the idempotents of $S$ and $T$.
We show that for a full amalgam $[S,T,U]$,
$\cstar(S \amal{U} T) \cong \cstar(S) \amal{\cstar(U)} \cstar (T)$.
Using this result, we describe certain
amalgamated free products of C$^*$-algebras, including
finite-dimensional $\cstar$-algebras, the Toeplitz algebra, and
the Toeplitz $\cstar$-algebras of graphs.
\end{abstract}
\maketitle

\section{Introduction}
Inverse semigroups are playing an increasingly prominent role in
the theory of C$^*$-algebras. This paper connects certain amalgams
of inverse semigroups and of C$^*$-algebras. Using this connection,
we describe amalgams of various $\cstar$-algebras.

The first work on amalgamated free products of $\cstar$-algebras
that we know of is due to Blackadar ~\cite{Blackadar}.  Shortly
thereafter, Larry Brown noted in~\cite{Brown} that for countable
discrete groups $G$ and $H$ with a common subgroup $K$, $\cstar(G
\amal{K} H) \isom \cstar(G) \amal{\cstar(K)} \cstar(H)$. The
obvious generalization for inverse semigroups is not true, even
for finite inverse semigroups, without some restriction; see, for
instance, Example~\ref{eg:notfull} below. In Section~\ref{S:amal},
we prove an analogous result for full amalgams of discrete inverse
semigroups, namely
\[ \cstar( S \amal{U} T) \isom \cstar(S) \amal{\cstar(U)} \cstar(T). \]

We apply this result to describe the structure of certain amalgams
of $\cstar$-algebras. First, we describe amalgams of
finite-dimensional $\cstar$-algebras over the natural diagonal
matrices in Section~\ref{S:matalg}. These amalgams turn out to be
direct sums of matrix algebras over the $\cstar$-algebras of free
groups.  The ranks of the free groups and the sizes of the matrix
algebras are easily computed using graphs arising from Bass-Serre
theory~\cite{HMM}. These methods extend to direct sums of matrix
algebras over group $\cstar$-algebras.

Section~\ref{S:meg} gives some structural results for amalgams of a
strongly $E^*$-unitary inverse semigroup with itself. These
results allow us to apply work of Khoshkam and Skandalis~\cite{KS}
and of Milan~\cite{Milan} to decompose certain amalgams of
$\cstar$-algebras as either  crossed products or partial crossed
products of abelian $\cstar$-algebras and groups. Specifically,
Section~\ref{S:calgex} shows that a full amalgam of the Toeplitz
algebra with itself is strongly Morita equivalent to a crossed
product of an abelian $\cstar$-algebra and a group, while the
amalgam of a Toeplitz graph $\cstar$-algebra with itself over the
natural diagonal is isomorphic to a partial crossed product of an
abelian $\cstar$-algebra and a group.

We remark that the structure of amalgamated free products of
semigroups or of inverse semigroups is far from understood in
general. For example, it is known that the word problem for an
amalgamated free product $S_1 \amal{U} S_2$ of semigroups (in the
category of semigroups) may be undecidable even if  $S_1, S_2$ and
$U$ are finite semigroups~\cite{Sapir}. On the other hand, the
word problem for an amalgamated free product $S_1 \amal{U} S_2$ of
finite inverse semigroups in the category of inverse semigroups is
decidable \cite{CMP}. It follows from results of Bennett
\cite{Bennett} that the word problem for $S_1 \amal{U} S_2$ is
decidable if $U$ is a full inverse subsemigroup of the inverse
semigroups $S_1$ and $S_2$.

The structure of amalgamated free products of $\cstar$-algebras
has been studied extensively by Pedersen in~\cite{Pedersen},
which also includes an excellent introduction and bibliography.

Next, we review the background we need.
For more information, see~\cite{Howie2}, \cite{Lawson}, or~\cite{Petrich}
for introductions to inverse semigroups;
see~\cite{Davidson} or~\cite{Fillmore}, for example, for more
on $\cstar$-algebras.

Amalgamated free products may be defined in any category by the
standard universal property. Given objects $U$, $S_1$, and $S_2$,
with monomorphisms $i_j : U \to S_j$, $j=1,2$ in some category,
the \textit{free product of $S_1$ and $S_2$, amalgamated over $U$}
is an object $T$ and morphisms $\psi_i : S_i \to T$
with $\psi_1 i_1 = \psi_2 i_2$ so that for
any object $R$ and morphisms $\phi_j : S_j \to R$ with $\phi_1 i_1
= \phi_2 i_2$, there is a unique morphism $\lambda : T \to R$ so
that the following diagram commutes:

\begin{equation} \label{eq:amaldia}
\begin{tikzpicture}[baseline=-.2em,description/.style={fill=white,inner sep=2pt}]
\matrix (m) [matrix of math nodes, row sep=2.5em, column
sep=2.5em,
    text height=1.5ex, text depth=0.25ex]
{ & S_1 \\
 U & & T & & R \\
& S_2 \\ }; \path[->,font=\scriptsize] (m-2-1) edge node[auto] { $
i_1 $ } (m-1-2)
    edge node[auto,swap] { $ i_2 $ } (m-3-2)
(m-1-2) edge node[right=5pt,above=1pt] { $ \psi_1 $ } (m-2-3)
    edge [bend left=15] node[auto] { $ \phi_1 $ } (m-2-5)
(m-3-2) edge node[right=5pt,below=1pt] { $ \psi_2 $ } (m-2-3)
    edge [bend right=15] node[auto,swap] { $ \phi_2 $ } (m-2-5);
\path[->,font=\scriptsize,dotted] (m-2-3) edge
node[above=6pt,left=3pt] {$\lambda$} (m-2-5);
\end{tikzpicture}
\end{equation}
If it exists, the object $T$ is unique up to isomorphism and is
denoted $S_1 \amal{U} S_2$. The tuple $[S_1,S_2,U,i_1,i_2]$ is
called an {\it amalgam} in the category: in all cases of interest
in this paper, the monomorphisms $i_1,i_2$ will be embeddings.
We will often use $[S_1,S_2,U]$ and think of $U$ as contained in $S_1$
and $S_2$.

\subsection*{Inverse Semigroups}\label{subsect:IS}
An inverse semigroup is a semigroup $S$ such that for each $s \in
S$ there exists a unique element $s^{-1} \in S$ such that
$ss^{-1}s=s$ and $s^{-1}ss^{-1}=s^{-1}$.
Every inverse semigroup
$S$ is evidently (von Neumann) regular, i.e., for each $s \in S$
there exists $t \in S$ such that $s = sts$. Inverse semigroups can
be characterized as those regular semigroups whose idempotents
commute~\cite[Theorem~1.1.3]{Lawson}. Inverse semigroups may also
be viewed as an equationally defined class of semigroups with an
involution $s \mapsto s^{-1}$ so that $ss^{-1}s=s$ and
$ss^{-1}tt^{-1}=tt^{-1}ss^{-1}$ for all $s$ and $t$
\cite[Theorem~VIII.1.1]{Petrich}.

We denote the set of
idempotents of an inverse semigroup $S$ by $E(S)$, or $E$, if $S$
is clear: $E(S)$ is a commutative idempotent semigroup, (i.e., a
semilattice) relative to the product in $S$. There is a
natural partial order on an inverse semigroup $S$ defined by $a
\leq b$ (for $a,b \in S$) iff there exists $e \in E(S)$ such that
$a = eb$. The smallest congruence $\sigma$ on $S$ for which
$S/\sigma$ is a group is generated by collapsing this partial
order.
Note that if $S$ has a zero, then $S/\sigma \cong \{0\}$.

An inverse subsemigroup $T$ of an inverse semigroup $S$ is
called a {\it full} subsemigroup of $S$ if it contains all of the
idempotents of $S$, i.e., $E(T) = E(S)$. An amalgam
$[S_1,S_2,U]$ of inverse semigroups is called a {\it full}
amalgam if $U$ is a full inverse subsemigroup of $S_1$ and $S_2$.

It is a non-trivial fact that the category of inverse semigroups
 has the {\it strong amalgamation property}, that is, if
$[S_1,S_2,U,i_1,i_2]$ is an amalgam of inverse semigroups, then in
the notation of the definition above, the morphisms $\psi_i$ are
monomorphisms (embeddings) and $\psi_1(S_1) \cap \psi_2(S_2)$
equals the image of $U$ \cite{Hall}. This property fails in
general in the category of semigroups \cite[p.\ 139]{CP}.

In \cite{HMM}, the authors use Bass-Serre theory to describe the
structure of the maximal subgroups of  $S_1 \amal{U} S_2$ in the
case where $[S_1,S_2,U,i_1,i_2]$ is a full amalgam. We will
use these results in Sections~\ref{S:matalg} and~\ref{S:calgex}.

An inverse semigroup $S$ may or may not have an identity element
$1$ or a zero element $0$. If $S$ has an identity we refer to it
as an inverse monoid.
If $S$ does not have a zero, we may
adjoin one, obtaining the inverse semigroup with zero $S^0 = S
\cup \{0\}$ with the obvious multiplication making $0$ the zero
element.

A representation of an inverse semigroup $S$ is a homomorphism of semigroups
$\rho : S \to B(\sH)$, the bounded operators on a Hilbert space $\sH$, such that $\rho$
sends the inverse operation of the semigroup to the adjoint operation of $B(\sH)$.
Each $T \in \rho(S)$ satisfies $TT^*T=T$ and $T^*TT^*=T^*$ and so
$T$ is a partial isometry in $B(\sH)$.
In fact, every inverse semigroup can be faithfully represented as a semigroup of
partial isometries on some Hilbert space~\cite{DP}.

\subsection*{$\cstar$-Algebras}
One can define $\cstar(S)$ so that it has the universal property that each
representation of $S$ lifts to a unique representation of $\cstar(S)$.
Precisely, there is a monomorphism $i : S \to \cstar(S)$ so that,
for each representation $\rho :S \to B(\sH)$, there is a
unique representation $\tilde{\rho} : \cstar(S) \to B(\sH)$ with
$\tilde{\rho} \circ i = \rho$.
It follows from the uniqueness that if two representations of
$\cstar(S)$ agree on $S$, then they are equal.
For details, see \cite[Section~1]{DP}.
Of course, for a finite inverse semigroup $S$, $\cstar(S)$ is the
complex inverse semigroup algebra, $\bC S$.

For an inverse semigroup $S$ with a zero, $0$, it is natural to restrict
to representations that send $0$ to the zero operator.
If we modify the universal property of $\cstar(S)$ to consider only such
representations, then we obtain the contracted $\cstar$-algebra, $\cstar_0(S)$.
This can be identified with the quotient of $\cstar(S)$ by the ideal
generated by $0$, which is a copy of the complex numbers. 
That is, $\cstar(S) \isom \cstar_0(S)\oplus \bC$. 
We can define $\bC_0 S$ similarly.

Let $\sP$ be a semilattice of projections in a $\cstar$-algebra $\sA$,
that is, $\sP$ is closed under products.
Note that $\sP$ is always commutative, as
two projections in $\sA$ whose product is a projection must commute.
Define $\PI(\sP)$ to be the set of all partial isometries $X$ in $\sA$,
i.e., elements satisfying $X=XX^*X$ and $X^*=X^*XX^*$, such that
(1)~$X^*X, XX^* \in \sP$, and (2)~$X^*\sP X \subseteq \sP$ and  $X\sP X^* \subseteq \sP$.
Observe that if $\sA$ is unital and $1_A \in \sP$, then Condition~(2) gives
$XX^*=X 1_A X^* \in \sP$ and $X^*X \in \sP$ similarly, so we can omit 
Condition~(1) from the definition in this case.

\begin{prop}\label{prop:PIprop}
If $\sP$ is a semilattice of projections in a $\cstar$-algebra $\sA$,
Then $\PI(\sP)$ is an inverse semigroup with idempotents $\sP$.
Also, if $\sS \subset \sA$ is an inverse semigroup with $E(\sS)\subseteq \sP$,
then $\sS \subseteq \PI(\sP)$.
\end{prop}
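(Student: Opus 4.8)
The plan is to verify directly that $\PI(\sP)$ is a $*$-closed subsemigroup of $\sA$ that is regular and whose idempotent set is exactly $\sP$; the characterization of inverse semigroups as the regular semigroups with commuting idempotents (cited in the introduction) then gives the first assertion, since all idempotents lie in the commutative set $\sP$. Throughout, for a partial isometry $X$ I abbreviate its source and range projections by $e_X = X^*X$ and $f_X = XX^*$, both of which lie in $\sP$ by Condition~(1).

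The first and most substantial step is closure under multiplication. For $X,Y \in \PI(\sP)$ the decisive observation is that $e_X$ and $f_Y$ both lie in $\sP$ and hence commute. Using this, $XY\,(XY)^*\,XY = X f_Y e_X Y = X e_X f_Y Y = XY$, so $XY$ is a partial isometry, and the symmetric identity handles $(XY)^*$. Condition~(1) for $XY$ comes from writing $(XY)^*(XY) = Y^*(X^*X)Y$ and applying Condition~(2) for $Y$ to $X^*X \in \sP$, together with the dual computation for $(XY)(XY)^*$. Condition~(2) follows by factoring $(XY)^*p(XY) = Y^*\bigl(X^*pX\bigr)Y$ and applying Condition~(2) first for $X$ and then for $Y$. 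Closure under the adjoint is immediate, as passing from $X$ to $X^*$ simply interchanges the two halves of Conditions~(1) and~(2). I also record the easy inclusion $\sP \subseteq \PI(\sP)$: each projection $p$ is a partial isometry with $p^*p = pp^* = p \in \sP$, and $p\sP p \subseteq \sP$ since $\sP$ is commutative and closed under products.

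I next identify the idempotents. Every $X \in \PI(\sP)$ is regular because $XX^*X = X$ with $X^* \in \PI(\sP)$, so it remains to prove that an idempotent $X = X^2$ lies in $\sP$. With $e = e_X = X^*X \in \sP$, the relation $X^2 = X$ yields $eX = X^*(XX) = X^*X = e$, and taking adjoints gives $X^*e = e$. Then $(X-e)^*(X-e) = X^*X - X^*e - eX + e = e - e - e + e = 0$, so $X = e \in \sP$. Conversely each element of $\sP$ is idempotent, whence $E(\PI(\sP)) = \sP$. These idempotents commute, so $\PI(\sP)$ is an inverse semigroup, and uniqueness of inverses identifies the inverse of $X$ with $X^*$.

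Finally, for the second assertion I take $s \in \sS$ and check the three requirements with $X = s$. Since $\sS$ is an inverse semigroup of operators, its inverse operation is the adjoint, so $ss^*s = ss^{-1}s = s$ shows $s$ is a partial isometry, and $s^*s, ss^* \in E(\sS) \subseteq \sP$ gives Condition~(1). The crux is Condition~(2), and here the essential input is the standard inverse-semigroup identity $s^{-1}es \in E(\sS)$ for every $e \in E(\sS)$, which says that conjugation by $s$ returns the idempotents of $\sS$ to $E(\sS) \subseteq \sP$. I expect Condition~(2) to be the main obstacle: controlling $s^*ps$ for a general $p \in \sP$ requires that $\sP$ be invariant under conjugation by $\sS$, and the identity above supplies exactly this when $\sP$ is the idempotent semilattice $E(\sS)$. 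This compatibility between $\sP$ and the idempotents of $\sS$ is the step I would treat most carefully.
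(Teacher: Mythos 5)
Your treatment of the first assertion is correct and is essentially the paper's own proof: closure of $\PI(\sP)$ under multiplication via the commutativity of $X^*X$ and $YY^*$ (both lying in $\sP$), Conditions~(1) and~(2) for $XY$ via the same factorizations $(XY)^*(XY)=Y^*(X^*X)Y$ and $(XY)^*p(XY)=Y^*(X^*pX)Y$, closure under adjoints, and then the regular-semigroup-with-commuting-idempotents characterization. You in fact supply two details the paper compresses: the explicit $\cstar$-identity computation $(X-e)^*(X-e)=0$ showing that an idempotent of $\PI(\sP)$ is a projection (the paper calls this ``easy to check''), and the inclusion $\sP\subseteq\PI(\sP)$, which is genuinely needed to get $E(\PI(\sP))=\sP$ rather than just one containment.

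On the second assertion, the obstacle you flag at the end is real, and you are right not to wave it away. The hypothesis $E(\sS)\subseteq\sP$ together with the identity $s^{-1}es\in E(\sS)$ gives only $s^*E(\sS)s\subseteq E(\sS)$, which verifies Condition~(2) precisely when $\sP=E(\sS)$. If the containment $E(\sS)\subseteq\sP$ is proper, the assertion can fail outright: let $\sA=M_3(\bC)$, let $\sP$ be the semilattice of all diagonal projections, and let $\sS$ be the cyclic group of order $8$ generated by the unitary $R$ rotating the span of the first two basis vectors by $\pi/4$. Then $\sS$ is an inverse semigroup with inverse equal to adjoint and $E(\sS)=\{I\}\subseteq\sP$, and Condition~(1) holds for $R$, but $R^*e_{11}R$ is not diagonal, so $R\notin\PI(\sP)$. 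The paper's own proof checks exactly what you check --- each $X\in\sS$ is a partial isometry, has $X^*X,XX^*\in E(\sS)$, and conjugates $E(\sS)$ into itself --- and then concludes $\sS\subseteq\PI(\sP)$ without comment; so your argument is neither weaker nor stronger than the paper's, and both are complete exactly under the reading $\sP=E(\sS)$. That is the only case the paper uses: in Theorem~\ref{thm:Cstaramal} the semilattice $\sP$ is the image of $E(U)$, which by fullness is precisely the idempotent set of each $\Phi_j(S_j)$, and in the remark following the proposition $\sP=\Psi(E)$.

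One further caution: your opening claim that an ``inverse semigroup of operators'' automatically has the adjoint as its inverse operation is not true for abstract inverse subsemigroups of $\sA$. The set $\{0,\,2e_{12},\,\tfrac12 e_{21},\,e_{11},\,e_{22}\}\subseteq M_2(\bC)$ is a multiplicative semigroup, abstractly a combinatorial Brandt inverse semigroup whose idempotents are projections, yet $2e_{12}$ is not a partial isometry, so the second assertion would fail for it even with $\sP=E(\sS)$. Like the paper, you must read the hypothesis as saying that $\sS$ is closed under adjoints with $s^{-1}=s^*$ --- which is what holds for images of representations, the situation the proposition is designed to serve.
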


\begin{proof}
If $X,Y \in \PI(\sP)$, then $XY$ is a partial isometry, as $X^*X$ and
$YY^*$ are in $\sP$ and so commute. 
Clearly, $(XY)^* \sP XY = Y^*(X^*\sP X)Y \subseteq \sP$
and $(XY)^*XY=Y^*(X^*X)Y$ is in $\sP$, as $X^*X \in \sP$
and $Y^*\sP Y\subseteq \sP$. 
Verifying that $XY \sP (XY)^* \subseteq \sP$ and $XY(XY)^* \in \sP$
are similar, so $XY \in \PI(\sP)$.
If $X \in \PI(\sP)$, so is $X^*$, and $X^*$ is an inverse for $X$.
Finally, if $X$ is an idempotent in $\PI(\sP)$, then it is easy to
check that $X$ is a projection and hence $X=XX^* \in \sP$. 
As a regular semigroup whose idempotents commute, $\PI(\sP)$ is 
an inverse semigroup.

For $\sS$ as above, each $X \in \sS$ satisfies $XX^*X=X$, $X^*XX^*=X^*$,
conjugates $E(\sS)$ into itself, and has both $X^*X$ and $XX^*$ in $E(\sS)$.
Thus $\sS \subseteq \PI(\sP)$.
\end{proof}

In particular, it follows that if $\Psi : S \to \sA$ is a representation
of an inverse semigroup in a $\cstar$-algebra, then $\Psi(S) \subseteq
\PI(\Psi(E))$.

\section{Amalgams} \label{S:amal}

Before turning to our main theorem, we first point out a related
result.

\begin{thm}\label{thm:Calgamal}
Suppose $S=[S_1,S_2,U]$ is an amalgam of inverse semigroups with $U$ a full inverse
subsemigroup of both $S_1$ and $S_2$.  Then, in the category of complex algebras,
\[ \bC S \cong \bC S_1 \amal{\bC U} \bC S_2. \]
\end{thm}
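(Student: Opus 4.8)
The plan is to verify directly that $\bC S$, equipped with the linearized amalgam maps, satisfies the universal property of the amalgamated free product in the category of complex algebras. First I would set up the structure maps: the inclusions $U \hookrightarrow S_j$ carry basis to basis injectively, hence linearize to algebra embeddings $\bC U \hookrightarrow \bC S_j$, and the amalgam maps $\psi_j : S_j \to S$ linearize to algebra maps $\Psi_j := \bC\psi_j : \bC S_j \to \bC S$ with $\Psi_1|_{\bC U} = \Psi_2|_{\bC U}$, since $\psi_1 i_1 = \psi_2 i_2$ on $U$. Given an arbitrary complex algebra $R$ and algebra homomorphisms $\phi_j : \bC S_j \to R$ agreeing on $\bC U$, I must produce a unique algebra homomorphism $\lambda : \bC S \to R$ with $\lambda \Psi_j = \phi_j$. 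Uniqueness is immediate, since the $\psi_j(S_j)$ generate $S$ (each $S_j$ is closed under inverses, so inverse-semigroup and plain-semigroup generation coincide) and $S$ spans $\bC S$. Because an algebra map out of a semigroup algebra is the same thing as a semigroup map out of the semigroup, constructing $\lambda$ reduces to producing a semigroup homomorphism $f : S \to (R,\cdot)$ extending the restrictions $f_j := \phi_j|_{S_j} : S_j \to (R,\cdot)$, which agree on $U$.

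The difficulty is that $(R,\cdot)$ is not an inverse semigroup, so the universal property of the inverse-semigroup amalgam $S$ cannot be applied to $(R,\cdot)$ directly; the remedy is to build an inverse subsemigroup of $(R,\cdot)$ containing both images. Here fullness is decisive: since $E(S_1)=E(U)=E(S_2)$, the maps $f_1,f_2$ agree on all idempotents, so $\sE := f_1(E(U)) = f_2(E(U))$ is a single commutative semilattice of idempotents in $R$. Moreover each generator $f_j(s)$ has the genuine inverse $f_j(s^{-1})$, and for $e \in E(S_j)=E(U)$ one computes $f_j(s^{-1})\,f_j(e)\,f_j(s) = f_j(s^{-1}es) \in \sE$, since $s^{-1}es$ is idempotent; thus $\sE$ is stable under conjugation by the images and their inverses. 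Mirroring Proposition~\ref{prop:PIprop}, I would then argue that $W$, the subsemigroup of $(R,\cdot)$ generated by $f_1(S_1)\cup f_2(S_2)$, is an inverse semigroup with idempotent semilattice $\sE$: every word is regular with reverse-word inverse, conjugation-stability forces all $w^{-1}w$ and $ww^{-1}$ into $\sE$, and the idempotents of $W$ commute because they collapse into $\sE$. Granting this, $f_1,f_2 : S_j \to W$ are inverse-semigroup homomorphisms automatically (a semigroup map between inverse semigroups preserves inverses), so the universal property of $S = S_1 \amal{U} S_2$ yields $f : S \to W \subseteq (R,\cdot)$ extending $f_1,f_2$, and $\lambda := \bC f$ finishes the verification.

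The main obstacle is exactly the claim that $W$ is inverse, equivalently that every idempotent of $W$ lies in the single semilattice $\sE$; this is where fullness is indispensable, since without it the base idempotents of $S_1$ and $S_2$ need not be identified in $R$, their images need not commute, and $W$ can fail to be inverse (the mechanism behind Example~\ref{eg:notfull}). I expect the commuting-idempotents step to require tracking how idempotent factors are absorbed in reduced words via the relations $s^{-1}es \in E(U)$ and the semilattice structure of $\sE$. A more categorical route reaches the same conclusion: linearization is left adjoint to the forgetful functor to semigroups and so preserves pushouts, giving $\bC S_1 \amal{\bC U} \bC S_2 \isom \bC(S_1 \amal{U}^{\mathbf{Sgp}} S_2)$, whereupon the theorem reduces to the structural fact that for a full amalgam the pushout in the category of semigroups is already inverse and hence equals $S$, extractable from the normal-form analysis of full amalgams in~\cite{Bennett} and~\cite{HMM}. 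Either way, fullness is precisely the hypothesis that promotes the soft functorial statement to a theorem.
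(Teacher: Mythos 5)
Your primary route stalls at precisely the step you yourself flag as the main obstacle, and the gap is genuine: nothing in your argument shows that every idempotent of $W$ lies in $\sE$, and the proposed mechanism---``mirroring Proposition~\ref{prop:PIprop}''---is not available in this setting. In that proposition the decisive step, that an idempotent of $\PI(\sP)$ lies in $\sP$, is proved by observing that an idempotent partial isometry in a $\cstar$-algebra is a projection; this rests on the involution and the norm, neither of which exists in a bare complex algebra $R$. What your reverse-word construction does correctly give is regularity: each $w \in W$ has a factorization-dependent quasi-inverse $w^\dagger$ with $w w^\dagger w = w$, $w^\dagger w w^\dagger = w^\dagger$, and $w w^\dagger,\, w^\dagger w \in \sE$ (by conjugation-stability and induction). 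But if $w$ is an idempotent of $W$, these identities only yield $w = (w w^\dagger) w = w (w^\dagger w)$; they do not place $w$ in $\sE$, nor do they make two idempotents of $W$ commute. So the inverse property---the whole point, as you note---is asserted rather than proved.

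Moreover, the missing claim is not a routine lemma: in this generality it is equivalent to the theorem of Howie on which the paper's own proof rests. In one direction, $\sE$ is a full inverse subsemigroup (a semilattice) of both $f_1(S_1)$ and $f_2(S_2)$, so the universal property of the pushout in the category of semigroups gives a morphism $f_1(S_1) \amalmon{\sE} f_2(S_2) \to (R,\cdot)$ whose image is $W$; by \cite[Theorem~2]{Howie} that pushout is an inverse semigroup, and homomorphic images of inverse semigroups are inverse, so $W$ is inverse. In the other direction, taking $R = \bC(S_1 \amalmon{U} S_2)$ and $f_j$ the pushout maps, your claim asserts that $S_1 \amalmon{U} S_2$ itself is inverse, which is exactly Howie's theorem. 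So completing your first route honestly amounts to citing (or reproving) Howie, not to an elementary word computation. This is what your closing paragraph does, and that paragraph is in fact the paper's actual proof: $M \mapsto \bC M$ is a left adjoint of the forgetful functor and hence preserves pushouts, and the semigroup pushout of a full amalgam of inverse semigroups is already inverse, so it coincides with $S_1 \amal{U} S_2$. One correction there: the structural fact you need is precisely \cite[Theorem~2]{Howie}, the citation the paper uses; Bennett's normal forms in \cite{Bennett} concern the amalgam in the category of \emph{inverse} semigroups, whereas the issue here is whether the pushout in the category of \emph{all} semigroups agrees with it---conflating the two is exactly the point requiring justification.
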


\begin{example} \label{eg:notfull}
The conclusion of Theorem~\ref{thm:Calgamal} is not true without some condition on the amalgam.
Let $S$ and $T$ be different copies of the two element semilattice, i.e.,
$S=\{e,0\}$ with $e^2=e$ and all other products equal to $0$ and $T=\{f,0\}$ is similar.
Letting $U=\{0\}$, we see that the inverse semigroup amalgam, $S \amal{U} T$,
has four elements $e,f,ef$, and $0$ and $\bC(S \amal{U} T)=\bC^4$.
We also have $\bC S=\bC^2$, $\bC T=\bC^2$ and
$\bC U=\bC$. However, the existence of inverse semigroup
homomorphisms from $\bC S$ and $\bC T$ into a complex algebra does
not force the images of $e$ and $f$ to commute. Thus, in general,
there is no homomorphism from $\bC(S \amal{U} T)\cong\bC^4$ to $\bC S
\amal{\bC U} \bC T$.
\end{example}

Another complicating fact is that the functor $S \mapsto \bC S$ from inverse semigroups to
complex algebras behaves badly with respect to colimits.
The difficulty is that the multiplicative semigroup of $\bC S$ need not be an
inverse semigroup.
The construction of a complex algebra can be performed on an arbitrary semigroup, though,
and we can use this to prove the result for full amalgams.

\begin{proof}[\protect{Proof of Theorem~\ref{thm:Calgamal}}]
Consider the amalgamated free product, in the category of semigroups, of inverse
semigroups $S_1$ and $S_2$ over the inverse semigroup $U$, which we denote by
$S_1 \amalmon{U} S_2$.
Morphisms in the category of inverse semigroups are just semigroup 
morphisms~\cite[p.~30]{Lawson}.
The functor that sends a semigroup $M$ to $\bC M$ has a right adjoint given by the
forgetful functor (forget everything in $\bC M$ except the multiplication).
It follows that this functor preserves
colimits~\cite[Dual of Theorem V.5.1]{MacLane}.
Thus the Diagram~\eqref{eq:amaldia} does lift to the category of complex algebras, but from
the category of semigroups, not that of inverse semigroups.
That is,
\[\bC(S_1 \amalmon{U} S_2) = \bC S_1 \amal{\bC U} \bC S_2. \]
Finally, \cite[Theorem~2]{Howie} asserts that for $U$ a full inverse subsemigroup of 
both $S_1$ and $S_2$, then $S_1 \amalmon{U} S_2$ is an inverse semigroup.
Thus, $S_1 \amalmon{U} S_2 = S_1 \amal{U} S_2$ and so
\[ \bC S_1 \amal{\bC U} \bC S_2 = \bC(S_1 \amal{U} S_2), \]
as required.
\end{proof}

Given a full amalgam of inverse semigroups $[S_1,S_2,U,i_1,i_2]$, the inclusions $i_j : U \to S_j$
induce inclusions $I_j: \cstar(U) \to \cstar(S_j)$.
Thus, we have an associated amalgam of $\cstar$-algebras $[\cstar(S_1), \cstar(S_2), \cstar(U)]$.
We will always assume that the inclusions of this amalgam are induced by the inverse
semigroup inclusions.

\begin{thm}\label{thm:Cstaramal}
Suppose that $[S_1,S_2,U]$ is a full amalgam of inverse semigroups.
Then
\[ \cstar(S_1 \amal{U} S_2) \isom \cstar(S_1) \amal{\cstar(U)} \cstar(S_2) \]
and, if $U$ has a zero, then
\[ \cstar_0(S_1 \amal{U} S_2) \isom \cstar_0(S_1) \amal{\cstar_0(U)} \cstar_0(S_2). \]
\end{thm}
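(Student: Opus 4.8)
The plan is to show directly that $\cstar(S_1 \amal{U} S_2)$ satisfies the universal property of $\cstar(S_1) \amal{\cstar(U)} \cstar(S_2)$ by exhibiting mutually inverse $*$-homomorphisms, rather than appealing to an adjoint-functor/colimit argument as in the proof of Theorem~\ref{thm:Calgamal}. The completion involved in forming $\cstar$ makes the abstract colimit argument awkward here, so I would build both maps by hand and then check that they are inverse to one another on generators.

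Write $S = S_1 \amal{U} S_2$, let $\psi_j : S_j \to S$ be the canonical embeddings (which exist and satisfy $\psi_1 i_1 = \psi_2 i_2$ by the strong amalgamation property), and let $i^S : S \to \cstar(S)$ and $i^S_j : S_j \to \cstar(S_j)$ be the universal embeddings. Functoriality of $\cstar(-)$ on inverse-semigroup homomorphisms (which follows from the uniqueness of lifts) gives maps $\cstar(\psi_j) : \cstar(S_j) \to \cstar(S)$, and since $\psi_1 i_1 = \psi_2 i_2$ these agree on $\cstar(U)$. The universal property of the $\cstar$-algebra amalgam then produces a $*$-homomorphism $\Phi : \cstar(S_1) \amal{\cstar(U)} \cstar(S_2) \to \cstar(S)$ with $\Phi \circ \kappa_j = \cstar(\psi_j)$, where $\kappa_j$ denote the structure maps of the amalgam.

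For the reverse map, set $A = \cstar(S_1) \amal{\cstar(U)} \cstar(S_2)$ and define $\sigma_j = \kappa_j \circ i^S_j : S_j \to A$. These are inverse-semigroup representations agreeing on $U$, because $\kappa_1 I_1 = \kappa_2 I_2$ for the induced inclusions $I_j : \cstar(U) \to \cstar(S_j)$. Here is where fullness is the crux: since $E(S_1) = E(S_2) = E(U)$, the images $\sigma_1(E(S_1))$ and $\sigma_2(E(S_2))$ coincide in a single semilattice of projections $\sP \subset A$, so by Proposition~\ref{prop:PIprop} both $\sigma_1$ and $\sigma_2$ take values in the one inverse semigroup $\PI(\sP)$. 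The universal property of $S$ in the category of inverse semigroups then yields a single homomorphism $\sigma : S \to \PI(\sP) \subseteq A$ with $\sigma \psi_j = \sigma_j$; representing $A$ faithfully on a Hilbert space $\sH$ and applying the universal property of $\cstar(S)$ lifts $\sigma$ to a $*$-homomorphism $\Psi : \cstar(S) \to A$.

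Finally I would verify that $\Phi$ and $\Psi$ are mutually inverse by evaluating on generators: on each $i^S(\psi_j(s))$ one computes that $\Psi\Phi$ and $\Phi\Psi$ both return the generator, and since $i^S(\psi_1(S_1) \cup \psi_2(S_2))$ generates $\cstar(S)$ while the images $\kappa_j(\cstar(S_j))$ generate $A$, the two composites are the respective identities. The contracted statement follows by the identical argument, now using the universal property of $\cstar_0$ restricted to $0$-preserving representations: the common zero of $U$ is the zero of $S$, and the maps $\sigma_j$ automatically send it to $0$. The main obstacle is precisely this fullness step --- without $E(S_1) = E(S_2) = E(U)$ the idempotent images need not commute in $A$, so there is no common semilattice $\sP$ and the pair $\sigma_1, \sigma_2$ need not factor through any inverse semigroup, which is exactly the breakdown exhibited in Example~\ref{eg:notfull}.
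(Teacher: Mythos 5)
Your proposal is correct and follows essentially the same route as the paper: the heart of both arguments is that fullness forces all the idempotent images into a single semilattice of projections $\sP$, so that Proposition~\ref{prop:PIprop} provides the inverse semigroup $\PI(\sP)$ through which the two representations merge, via the universal property of $S_1\amal{U}S_2$, and then lift to the $\cstar$-level; your handling of the contracted case by restricting to $0$-preserving representations is likewise the first of the two options the paper indicates. The only difference is bookkeeping: the paper runs this construction against an arbitrary target algebra $\sA$ to verify the universal property of $\cstar(S_1\amal{U}S_2)$ outright, whereas you specialize it to the target $\cstar(S_1)\amal{\cstar(U)}\cstar(S_2)$ and pair it with the canonically induced map in the opposite direction, which additionally uses the (standard, but here implicitly assumed) facts that the $\cstar$-amalgam exists and is generated by the images of $\kappa_1$ and $\kappa_2$.
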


\begin{proof}
We show $\cstar(S_1 \amal{U} S_2)$ has the universal property of the $\cstar$-algebraic
amalgam $\cstar(S_1) \amal{\cstar(U)} \cstar(S_2)$ and so is isomorphic to it.
Precisely, if $i_j : U \to S_j$, $\psi_j : S_j \to S_1 \amal{U} S_2$ are the canonical injections, then
we use the lifts $I_j : \cstar(U) \to \cstar(S_j)$ and $\Psi_j : \cstar(S_j) \to \cstar(S_1 \amal{U} S_2)$.

Let $\sA$ be a $\cstar$-algebra and suppose there are $*$-homomorphisms
$\Phi_j : \cstar(S_j)\to\sA$,
that agree on $\cstar(U)$, that is, $\Phi_1 \circ I_1 = \Phi_2 \circ I_2$.
We will find an inverse semigroup and homomorphisms from each $S_j$ into that
inverse semigroup that induce $\Phi_j$.

Let $\sP$ be the image of $E(U)$ under $\Phi_j \circ I_j$.
As $E(S_j)=I_j(E(U))$, it follows from Proposition~\ref{prop:PIprop} that
$\Phi_j(S_j) \subseteq \PI(\sP)$ for $j=1$ and $j=2$.
Let $\phi_j : S_j \to \PI(\sP)$ be the restriction of $\Phi_j$ to $S_j$.
Thus we have the Diagram~\eqref{eq:amaldia} in the category of inverse
semigroups, with $R=\PI(\sP)$ and $T=S_1 \amal{U} S_2$.

By the universal property, we have a unique map $\lambda : S_1 \amal{U} S_2 \to \PI(\sP)$
that makes the diagram commute.
Lifting $\lambda$ gives a unique map $\eta$ from $\cstar(S_1\amal{U} S_2)$
into $\cstar(\PI(\sP))$.
The inclusion map from $i : \PI(\sP) \to \sA$ is a representation and
so lifts to a unique map $\zeta: \cstar(\PI(\sP)) \to \sA$.
Letting $\Lambda = \zeta \circ \eta$, we have a map from $\cstar(S_1\amal{U} S_2)$
into $\sA$.

For $j=1,2$, we have
$\Lambda \circ \Psi_j |_{S_j} = \zeta |_{S_j} \circ (\lambda \circ
\psi_j) = i \circ \phi_j = \Phi_j |_{S_j}$.
Since a representation of $\cstar(S_j)$ is determined by its action on
$S_j$, $\Lambda \circ \Psi_j = \Phi_j$.
That is, the following diagram commutes:

\begin{equation}\label{eq:cstardia}
\begin{tikzpicture}[baseline=-.2em,description/.style={fill=white,inner sep=2pt}]
\matrix (m) [matrix of math nodes, row sep=2.5em, column sep=2.5em,
    text height=1.5ex, text depth=0.25ex]
{ & \cstar(S_1) \\
 \cstar(U) & & \cstar(S_1 \amal{U} S_2) & & \sA \\
& \cstar(S_2) \\ };
\path[->,font=\scriptsize]
(m-2-1) edge node[auto] { $ I_1 $ } (m-1-2)
    edge node[auto,swap] { $ I_2 $ } (m-3-2)
(m-1-2) edge node[right=5pt,above=1pt] { $ \Psi_1 $ } (m-2-3)
    edge [bend left=15] node[auto] { $ \Phi_1 $ } (m-2-5)
(m-3-2) edge node[right=5pt,below=1pt] { $ \Psi_2 $ } (m-2-3)
    edge [bend right=15] node[auto,swap] { $ \Phi_2 $ } (m-2-5);
\path[->,font=\scriptsize,dotted]
(m-2-3) edge node[above=6pt,left=3pt] {$\Lambda$} (m-2-5);
\end{tikzpicture}
\end{equation}

To see that $\Lambda$ is unique, suppose that replacing $\Lambda$ with
$\mu : \cstar(S_1\amal{U} S_2) \to \sA$ in this diagram also makes it commute.
Then $\mu \circ \Psi_j$ and $\Lambda \circ \Psi_j$
agree on $S_j$, for $j=1,2$ and so $\mu$ and $\Lambda$ agree on a generating
set of $S_1 \amal{U} S_2$ and so agree on $S_1 \amal{U} S_2$.
But this implies $\mu=\Lambda$, as required.
Thus, $\cstar(S_1 \amal{U} S_2)$ has the universal property for amalgamated free products
of $\cstar$-algebras and so is isomorphic to $\cstar(S_1) \amal{\cstar(U)} \cstar(S_2)$.

To obtain the result for the contracted algebras, one can either
repeat the above proof restricting to representations that take $0$ to $0$,
or apply the first result and quotient out on both sides by the ideals
associated to the common zero.  We outline the latter approach.
Consider the following commuting square:
\begin{equation}\label{eq:contracalg}
\begin{tikzpicture}[baseline=-.2em,description/.style={fill=white,inner sep=2pt}]
\matrix (m) [matrix of math nodes, row sep=2.5em, column sep=2.5em,
    text height=1.5ex, text depth=0.25ex]
{ & \cstar_0(S_1) \\
 \cstar_0(U) & & \cstar_0(S_1 \amal{U} S_2) \\
& \cstar_0(S_2) \\ };
\path[->,font=\scriptsize]
(m-2-1) edge node[auto] { $ I'_1 $ } (m-1-2)
    edge node[auto,swap] { $ I'_2 $ } (m-3-2)
(m-1-2) edge node[right=5pt,above=1pt] { $ \Psi'_1 $ } (m-2-3)
(m-3-2) edge node[right=5pt,below=1pt] { $ \Psi'_2 $ } (m-2-3);
\end{tikzpicture}
\end{equation}
where the primed maps are the appropriate lifts of the $i_j$ and $\psi_i$, as above.
Adding a copy of $\bC$ to each contracted $\cstar$-algebra and extending the
primed maps by mapping $\bC$ to $\bC$ gives the commuting square in Diagram~\eqref{eq:cstardia}.
Given $\Phi'_j : \cstar_0(S_j) \to \sA$, we can define 
$\Phi_j : \cstar(S_j) \to \sA\oplus\bC$ by mapping the copy of $\bC$ associated
to the zero of $S$ to the copy of $\bC$ in the codomain algebra.
The result above gives a unique map $\Lambda : \cstar(S_1 \amal{U} S_2) \to \sA\oplus\bC$
and, identifying $\cstar(S_1 \amal{U} S_2)$ with $\cstar_0(S_1 \amal{U} S_2) \oplus\bC$,
then one can show that if $\Lambda'=\Lambda |_{\cstar_0(S_1 \amal{U} S_2)}$, then
the range of $\Lambda'$ is contained in $\sA$, and $\Lambda'$ is the unique
map making the appropriate diagram based on~\eqref{eq:contracalg} commute.
\end{proof}

\section{Amalgams of Finite-Dimensional $\cstar$-Algebras} \label{S:matalg}

As an application, we use Theorem~\ref{thm:Cstaramal} to describe amalgams of
finite-dimensional $\cstar$-algebras, i.e., direct sums of matrix algebras over $\bC$,
over the diagonal matrices.
These methods easily extend to amalgams of direct sums of matrix algebras over
(discrete) group $\cstar$-algebras.

Given a group $G$ and a natural number $n$, we define the Brandt inverse semigroup $B_n(G)$
as the set $\{ (i,g,j) : 1 \le i,j \le n, g \in G \}$ together with $0$
where we define the product of $0$ with any element to be $0$ and
the product of $(i,g,j)$ and $(k,h,l)$ to be $(i,gh,l)$ if $j=k$ and $0$ otherwise.
If $G$ is the trivial group, we use $B_n$ for $B_n(G)$; this is called a
combinatorial Brandt inverse semigroup.
Notice that $B_n$ can be identified with the matrix units of $M_n=M_n(\bC)$, together with
the zero matrix.  Further, $\bC B_n=M_n \oplus \bC$, $\bC_0 B_n=M_n$, and
$\cstar(B_n(G))=M_n(\cstar(G))\oplus \bC$.

Given two semigroups $S$ and $T$, each with a zero $0$, the $0$-direct union of $S$ and $T$
is $S \amal{\{0\}} T$.
If $S$ is the $0$-direct union of finitely many combinatorial Brandt inverse semigroups $B_{n_1}, \ldots, B_{n_k}$,
then $\cstar_0(S)=M_{n_1} \oplus \cdots \oplus M_{n_k}$.
Since all finite-dimensional $\cstar$-algebras are finite direct sums of matrix algebras,
we can identify all finite dimensional $\cstar$-algebras as $\cstar$-algebras of inverse semigroups.

Suppose that $P=\oplus_{i=1}^r M_{m_i}$ and $Q=\oplus_{i=1}^s M_{n_i}$, where
$\sum_i m_i = \sum_i n_i$.
Using $N$ for this common sum, we identify $\bC^N$ with a natural abelian subalgebra of $P$ and $Q$,
namely the diagonal matrices.
We can describe $P \amal{\bC^N} Q$ by recognizing $P$ and $Q$ as $\cstar$-algebras of inverse
semigroups as described above.
If $S$ is the $0$-direct union of $B_{m_1},\ldots,B_{m_r}$, $\cstar_0(S)$ is $P$.
Similarly, $\cstar_0(T)$ is $Q$ for $T$ the $0$-direct union of $B_{n_1},\ldots,B_{n_s}$.
Moreover, $\bC^N=\cstar_0(U)$ for $U=E(S)=E(T)$.
Thus, by Theorem~\ref{thm:Cstaramal},
\[ P \amal{\bC^N} Q = \cstar_0(S \amal{U} T). \]
We apply the results of~\cite{HMM} to describe the maximal subgroups of this amalgam
of inverse semigroups. 
We need one of the standard Green's relations for inverse semigroups:
the $\sJ$-relation on a semigroup  is defined by $u \sJ v$ iff $u$ and
$v$ generate the same principal two sided ideal of the semigroup
\cite[Section~3.2]{Lawson}.
The non-zero $\sJ$-classes of the semigroups $S$ and $T$
correspond to the summands of $P$ and $Q$. As $S$ and $T$ have
trivial maximal subgroups, the construction of~\cite[p.~46]{HMM} gives a
graph of groups with trivial vertex and edge groups, that is, a graph.
This graph has $r+s$ vertices, one for each summand of
$P$ and $Q$, and $N$ edges, one for each matrix unit in $\bC^N$.
Moreover, the edge associated to a non-zero idempotent $e \in U$
connects the vertices associated to the summands of $P$ and $Q$
containing $e$.

Let $W_1,\ldots,W_p$ be the components of this graph.
For each $W_i$, let $k_i$ be the number of edges in $W_i$ and $q_i$ be the number
of edges left over after removing a spanning tree from $W_i$.
Each $W_i$ corresponds to a non-zero $\sJ$-class in $S \amal{U} T$ and the maximal subgroup of
that $\sJ$-class is the free group of rank $q_i$, $F_{q_i}$.
Thus $S \amal{U} T$ is the $0$-direct union of $B_{k_1}(F_{q_1}), \ldots, B_{k_p}(F_{q_p})$.
For more details, see Example~3 of~\cite{HMM} and the subsequent
discussion in~\cite{HMM}.
Summarizing, we have the following result.

\begin{thm}\label{thm:FD}
If $P=\oplus_{i=1}^r M_{m_i}$ and $Q=\oplus_{i=1}^s M_{n_i}$, where
$\sum_i m_i = \sum_i n_i=N$, then
\[ P \amal{\bC^N} Q \isom \bigoplus_{i=1}^p M_{k_i}\bigl(\cstar(F_{q_i})\bigr), \]
where $p$, $k_1,\ldots,k_p$, $q_1,\ldots,q_p$ are obtained from the graph above.
\end{thm}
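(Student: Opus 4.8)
The plan is to realize $P \amal{\bC^N} Q$ as the contracted $\cstar$-algebra of an inverse semigroup amalgam, to analyze that inverse semigroup via the Bass--Serre methods of~\cite{HMM}, and then to read off the matrix-algebra decomposition. First I would fix the inverse semigroups: let $S$ be the $0$-direct union of $B_{m_1}, \ldots, B_{m_r}$ and $T$ the $0$-direct union of $B_{n_1}, \ldots, B_{n_s}$, so that $\cstar_0(S) \isom P$ and $\cstar_0(T) \isom Q$. Their common semilattice of idempotents is $U = E(S) = E(T)$, with $\cstar_0(U) \isom \bC^N$, so $[S,T,U]$ is a full amalgam and the contracted form of Theorem~\ref{thm:Cstaramal} gives
\[ P \amal{\bC^N} Q \isom \cstar_0(S) \amal{\cstar_0(U)} \cstar_0(T) \isom \cstar_0(S \amal{U} T). \]

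The central task is to identify the inverse semigroup $S \amal{U} T$, and here I would invoke~\cite{HMM}. Because $S$ and $T$ have trivial maximal subgroups and their non-zero $\sJ$-classes are combinatorial Brandt semigroups, the graph of groups produced by the construction of~\cite[p.~46]{HMM} has trivial vertex and edge groups and so is just a graph: it has $r+s$ vertices, one per summand of $P$ and $Q$, and $N$ edges, one per idempotent of $U$, with the edge for $e \in U$ joining the summands that contain $e$. Let $W_1, \ldots, W_p$ be its connected components. The results of~\cite{HMM} match the $W_i$ with the non-zero $\sJ$-classes of $S \amal{U} T$; the class associated to $W_i$ has $k_i$ idempotents, where $k_i$ is the number of edges of $W_i$, and its maximal subgroup is the free group $F_{q_i}$, where $q_i$ is the first Betti number of $W_i$, i.e.\ the number of edges left after deleting a spanning tree. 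Consequently the class of $W_i$ is the Brandt semigroup $B_{k_i}(F_{q_i})$, and $S \amal{U} T$ is the $0$-direct union of $B_{k_1}(F_{q_1}), \ldots, B_{k_p}(F_{q_p})$.

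It remains to compute the contracted $\cstar$-algebra of this $0$-direct union. Since a $0$-direct union is an amalgam over $\{0\}$ and $\cstar_0$ converts it into a direct sum, and since $\cstar(B_k(G)) \isom M_k(\cstar(G)) \oplus \bC$ strips on the contracted level to $\cstar_0(B_k(G)) \isom M_k(\cstar(G))$, I would conclude
\[ \cstar_0(S \amal{U} T) \isom \bigoplus_{i=1}^p \cstar_0\bigl(B_{k_i}(F_{q_i})\bigr) \isom \bigoplus_{i=1}^p M_{k_i}\bigl(\cstar(F_{q_i})\bigr), \]
which is the desired decomposition.

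The main obstacle is the middle step. Everything flanking it is formal: the reduction to inverse semigroups via Theorem~\ref{thm:Cstaramal} and the final return to matrix algebras are routine once the graph invariants are known. The genuine content lies in correctly applying the Bass--Serre machinery of~\cite{HMM} to count the non-zero $\sJ$-classes and, above all, to verify that each maximal subgroup is free of rank exactly the first Betti number of the corresponding component. I would treat that identification as the crux and lean directly on the computation in Example~3 of~\cite{HMM}.
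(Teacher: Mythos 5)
Your proposal is correct and follows essentially the same route as the paper: realizing $P$ and $Q$ as contracted $\cstar$-algebras of $0$-direct unions of combinatorial Brandt semigroups, applying the contracted form of Theorem~\ref{thm:Cstaramal}, invoking the Bass--Serre graph construction of~\cite{HMM} to identify $S \amal{U} T$ as a $0$-direct union of $B_{k_i}(F_{q_i})$, and then reading off the direct sum of matrix algebras. Your identification of the crux --- that the maximal subgroup of each $\sJ$-class is free of rank equal to the first Betti number of the corresponding graph component, as computed in Example~3 of~\cite{HMM} --- matches exactly where the paper places the weight of the argument.
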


For example, if $P=M_3\oplus M_3 \oplus M_2$ and $Q=M_2\oplus M_1 \oplus M_2 \oplus M_3$,
then the inverse semigroups are $B_3 \amal{\{0\}} B_3 \amal{\{0\}} B_2$
and $B_2 \amal{\{0\}} B_1 \amal{\{0\}} B_2 \amal{\{0\}} B_3$.
The resulting graph is
\tikzstyle{every node}=[circle,draw, fill=white,
                        inner sep=0pt, minimum width=4pt]
\[
\begin{tikzpicture}[thick]
   \draw (0,0) node {} to [out=5,in=175] (4,0);
   \draw (0,0) node {} to [out=-5,in=185] (4,0) node {};
   \draw (0,1) node {} --(4,0) node{};
   \draw (0,1) node {} to [out=5,in=175] (4,1);
   \draw (0,1) node {} to [out=-5,in=185] (4,1) node {};
   \draw (0,2.5) node {} to [out=0,in=195] (4,3);
   \draw (0,2.5) node {} to [out=20,in=177] (4,3) node {};
   \draw (0,2.5) node {} --(4,2) node{};

\end{tikzpicture}
\]

\medskip
\noindent
and so we have two components, with $k_1=3$, $q_1=1$, $k_2=5$ and $q_2=2$.
Thus, $ P \amal{\bC^8} Q \isom M_3(\cstar(\bZ))\oplus M_5(\cstar(F_2))$.

Of course, Theorem~\ref{thm:FD} immediately gives the $K$-theory of such
amalgams, first obtained by McClanahan in~\cite{McC}.
That $K_0(M_k(\cstar(F_q)))=\bZ$ and $K_1(M_k(\cstar(F_q)))=\bZ^q$ follow from
the stability of $K$-groups and the short exact sequence on page~83 of~\cite{Cuntz}.
Hence we obtain
\[ K_0(P \amal{\bC^N} Q) = \bZ^p, \qquad K_1(P \amal{\bC^N} Q) = \bZ^q, \]
where $q=q_1+\cdots +q_p$.
Haataja has shown (see~\cite[Section~4.3]{SHDiss}) that this agrees with
McClanahan's procedure for the computation of the
$K$-groups,~\cite[Proposition~7.1]{McC}.

Of course, these methods also apply to amalgams of matrix algebras over
group $\cstar$-algebras, as these are the $\cstar$-algebras of inverse
semigroups of the form $B_n(G)$, for a fixed group $G$.
We leave the details to the interested reader.

\section{Some Special Amalgams}  \label{S:meg}

In this section, we look at the structure of special amalgams and
describe the universal group of an inverse semigroup with zero, which is a
suitable generalization of the maximal group homomorphic image.
This enables us, in the next section, to describe certain special
amalgams of $\cstar$-algebras.

A {\it special amalgam} of inverse monoids is an amalgam $[S,S,U]$
of two copies of $S$ over a common inverse submonoid $U$. More
precisely, it is an amalgam $[S_1,S_2,U,i_1,i_2]$ together with an
isomorphism $\theta : S_1 \to S_2$ such that $i_2 = {\theta} \circ
i_1$.  If $G$ is a group, then the amalgamated free product $G
\amal{U} G$ is referred to as a ``double" of the group $G$.
The terminology ``special amalgam" comes from universal algebra, where
this concept has been well studied. 

We need more of the Green's relations:
$a \sH b$ if and only if $aa^{-1}=bb^{-1}$ (i.e.,
$a \sR b$) and $a^{-1}a=b^{-1}b$ (i.e., $a \sL b$).
For a full treatment of these relations, see, for example, 
\cite[Section~3.2]{Lawson}.

\begin{lemma}\label{lemma:specialH}
Let $U$ be a full inverse submonoid of an inverse monoid $S$ and
consider the special amalgam $[S,S,U,i_1,i_2]$ with associated
isomorphism $\theta : S \to S$. Then $ a \,\sH \, \theta(a)$ and
$ab \, \sH \, \theta(a)\theta(b) \, \sH \, a\theta(b) \, \sH \,
\theta(a)b$ in $S \amal{U} S$ for all $a,b \in S$.
\end{lemma}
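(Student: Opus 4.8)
The plan is to translate the statement into a comparison of two homomorphisms from $S$ into $S \amal{U} S$ that agree on idempotents. Write $\psi_1, \psi_2 : S \to S \amal{U} S$ for the canonical injections (embeddings, by strong amalgamation), set $\alpha = \psi_1$ and $\beta = \psi_2 \circ \theta$, and interpret the elements in the statement as $a = \alpha(a)$, $\theta(a) = \beta(a)$, $ab = \alpha(ab)$, $\theta(a)\theta(b) = \beta(ab)$, $a\theta(b) = \alpha(a)\beta(b)$, and $\theta(a)b = \beta(a)\alpha(b)$. The amalgam condition $\psi_1 i_1 = \psi_2 i_2 = \psi_2 \theta i_1$ says precisely that $\alpha$ and $\beta$ agree on $U$; since $U$ is full, $E(S) \subseteq U$, so $\alpha$ and $\beta$ agree on every idempotent of $S$. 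This single fact is what I would lean on throughout.

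Next I would recall that $x \sH y$ means $xx^{-1} = yy^{-1}$ (i.e.\ $x \sR y$) and $x^{-1}x = y^{-1}y$ (i.e.\ $x \sL y$), so each assertion reduces to checking that two range idempotents and two domain idempotents coincide. The guiding observation is that for any $x \in S$ both $xx^{-1}$ and $x^{-1}x$ lie in $E(S) \subseteq U$, so their images under $\alpha$ and $\beta$ are equal, and when such an idempotent occurs inside a mixed product it can be moved freely between the two embedded copies of $S$. Concretely, set $g = (ab)(ab)^{-1}$ and $h = (ab)^{-1}(ab)$, both idempotents of $S$. I would show that all four of $ab$, $\theta(a)\theta(b)$, $a\theta(b)$, $\theta(a)b$ have range idempotent $\alpha(g) = \beta(g)$ and domain idempotent $\alpha(h) = \beta(h)$, whence they are pairwise $\sH$-related; the separate claim $a \sH \theta(a)$ is then immediate, since $\alpha(a)$ and $\beta(a)$ have range idempotents $\alpha(aa^{-1}) = \beta(aa^{-1})$ and domain idempotents $\alpha(a^{-1}a) = \beta(a^{-1}a)$.

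For the two \emph{pure} elements the computation is direct: $\alpha(ab)\alpha(ab)^{-1} = \alpha(g)$ and $\beta(ab)\beta(ab)^{-1} = \beta(g)$, equal because $g \in E(S)$, and similarly for the domain idempotents. The step I expect to be the crux is the two \emph{mixed} products, where the range (or domain) idempotent a priori straddles both copies. For $a\theta(b) = \alpha(a)\beta(b)$ I would compute
\[
\bigl(\alpha(a)\beta(b)\bigr)\bigl(\alpha(a)\beta(b)\bigr)^{-1} = \alpha(a)\,\beta(bb^{-1})\,\alpha(a)^{-1} = \alpha(a)\,\alpha(bb^{-1})\,\alpha(a)^{-1} = \alpha(abb^{-1}a^{-1}) = \alpha(g),
\]
where the middle equality uses $bb^{-1} \in E(S)$ to replace $\beta(bb^{-1})$ by $\alpha(bb^{-1})$, collapsing the expression into the single copy $\alpha(S)$; the domain idempotent is handled symmetrically, pulling $a^{-1}a$ through $\beta$ to obtain $\beta(b^{-1}a^{-1}ab) = \beta(h) = \alpha(h)$. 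The element $\theta(a)b = \beta(a)\alpha(b)$ is then treated by the mirror-image calculation. Thus the essential point, and the only place fullness is used, is that the \emph{seam} idempotents of every product already lie in $U$ and so are shared by the two embedded copies of $S$.
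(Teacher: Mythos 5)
Your proof is correct and takes essentially the same approach as the paper: both arguments rest on the fact that fullness forces the idempotents of the two copies of $S$ to be identified in $S \amal{U} S$, use the characterization of $\sH$ via range and domain idempotents, and handle the mixed products $a\theta(b)$, $\theta(a)b$ by moving the ``seam'' idempotent ($bb^{-1}$ or $a^{-1}a$) from one embedded copy into the other so the product collapses to the range/domain idempotent of $ab$. Your version is just a more explicit rendering (via the embeddings $\alpha$, $\beta$) of the paper's computation.
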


\begin{proof}
Since $e$ is identified with $\theta(e)$ in $S \amal{U} S$ for all idempotents $e
\in E(S)$, it follows that $aa^{-1} = \theta(a)\theta(a)^{-1}$ and
$a^{-1}a = \theta(a)^{-1}\theta(a)$ in $S \amal{U} S$ and hence $a
\,  \sH \, \theta(a)$ in $S \amal{U} S$. It follows that $ab \,
\sH \, \theta(a)\theta(b)$ for all $a,b \in S$. Also, $ab \, \sR
\, abb^{-1}a^{-1}$, which is identified with
$a\theta(b)\theta(b^{-1})a^{-1}$ in $S \amal{U} S$, so $a\theta(b)
\, \sR \, ab$. Similarly $a\theta(b) \, \sL \, ab$ in $S \amal{U}
S$, so $ab \, \sH \, a\theta(b)$. The proof that $ab \, \sH \,
\theta(a)b$ is similar.
\end{proof}

A subset $U$ of a semigroup $S$ is called a {\it unitary subset}
of $S$ if, whenever either $us \in U$ or $su \in U$ for some $s
\in S, u \in U$, then $s \in U$. An inverse semigroup $S$ is
called {\it $E$-unitary} if $E(S)$ is a unitary subsemigroup of
$S$: equivalently, if $a \geq e$ for some $a \in S, \, e \in
E(S)$, then $a \in E(S)$.  The inverse semigroup $S$ is said to be
{\it $F$-inverse} if each $\sigma$-class has a maximum element in
the natural partial order: every $F$-inverse semigroup is
$E$-unitary. If $S$ has a zero, these concepts can be modified to
yield the concept of a $0$-$E$-unitary inverse semigroup (also
referred to as an $E^*$-inverse semigroup), namely, $E(S)-\{0\}$ is a
unitary subset and the concept of a $0$-$F$-inverse semigroup (also referred 
to as an $F^*$-inverse semigroup), namely, each non-zero element of $S$ is below
a unique maximal element in the natural partial order.

For an inverse semigroup $S$ with zero, consider pairs $(G,\phi)$
where $G$ is a group and $\phi : S \to G^0$ is a $0$-morphism,
that is, $\phi^{-1}(0)=\{0\}$ and $\phi(ab)=\phi(a)\phi(b)$ whenever
$ab \ne 0$. (We use $G^0$ for $G\cup\{0\}$ with the obvious
multiplication and, for a group morphism $\alpha : G \to H$, we
use $\alpha^0$ for the $0$-morphism from $G^0$ to $H^0$ that sends $0$ to $0$
and agrees with $\alpha$ on $G$.)
There is a largest group, the {\it universal group $G(S)$} of $S$,
with this property; that is, $(G(S),\phi)$ has the property that if
$\tau : S \to H^0$ is a $0$-morphism, then there is a group morphism
$\beta : G(S) \to H$ so that $\beta^0 \circ \phi = \tau$.
If $S^0$ is $S$ with a zero adjoined, then $G(S^0)$ coincides with
$S/\sigma$, the maximal group homomorphic image of $S$.

Let $\phi: S \to G^0$ be a $0$-morphism from $S$ to $G^0$ for some
group $G$, as in the definition above.
Following \cite{mcalister}, consider $\hat{S}$, the inverse semigroup
given by $\{(s,g): g = \phi(s)$ if $s \neq 0\} \cup \{(0,g): g \in G\}$
with the obvious multiplication.
The maximal group image of $\hat{S}$ is $G$, with the map given by
projection onto the second element of each ordered pair.
Moreover, $S$ and $\hat{S}$ have the same semilattice of idempotents,
and $S$ is the Rees quotient $S \cong \hat{S}/I$ where $I$ is
the ideal $I = \{(0,g): g \in G\}$.

\begin{prop}\label{prop:universal}
For $[S,T,U]$ an amalgam of inverse monoids with a common zero in $U$,
$G(S \amal{U} T) = G(S) \amal{G(U)} G(T)$.
\end{prop}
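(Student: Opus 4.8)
The plan is to show that $G(S\amal{U} T)$ enjoys the universal property defining $K:=G(S)\amal{G(U)} G(T)$, by producing mutually inverse group homomorphisms $\rho\colon K\to G(S\amal{U} T)$ and $\eta\colon G(S\amal{U} T)\to K$. First I record that $G(-)$ is functorial for \emph{$0$-restricted} homomorphisms (those $f$ with $f^{-1}(0)=\{0\}$): if $f\colon X\to Y$ is such a map, then $\phi_Y\circ f\colon X\to G(Y)^0$ is a $0$-morphism and so factors through $G(X)$. In particular the embeddings $i_S\colon U\to S$, $i_T\colon U\to T$ and the canonical maps $\iota_S\colon S\to S\amal{U} T$, $\iota_T\colon T\to S\amal{U} T$ (embeddings by the strong amalgamation property \cite{Hall}) induce $G(i_S)\colon G(U)\to G(S)$, etc., together with $G(\iota_S)$ and $G(\iota_T)$. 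Since $\iota_S i_S=\iota_T i_T$, functoriality gives $G(\iota_S)G(i_S)=G(\iota_T)G(i_T)$, so the universal property of the amalgamated free product of groups yields the \emph{easy} map $\rho\colon K\to G(S\amal{U} T)$ with $\rho\circ\alpha_S=G(\iota_S)$ and $\rho\circ\alpha_T=G(\iota_T)$, where $\alpha_S,\alpha_T$ denote the canonical inclusions into $K$.

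The content lies in the reverse map $\eta$, which by the universal property of $G(S\amal{U} T)$ amounts to constructing a $0$-morphism $\Phi\colon S\amal{U} T\to K^0$ restricting to $\psi_S:=\alpha_S^0\circ\phi_S$ on $S$ and to $\psi_T:=\alpha_T^0\circ\phi_T$ on $T$; note $\psi_S$ and $\psi_T$ agree on $U$. More generally I would show that any pair of $0$-morphisms $\psi_S\colon S\to H^0$ and $\psi_T\colon T\to H^0$ agreeing on $U$ extends uniquely to a $0$-morphism on $S\amal{U} T$; this is precisely the statement that $S\amal{U} T$ is the pushout of $S\leftarrow U\rightarrow T$ in the category of inverse semigroups with zero and $0$-morphisms, and, combined with the fact that $G(-)$ is left adjoint to $G\mapsto G^0$ (so preserves this colimit, cf.\ \cite{MacLane}), it gives the proposition. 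The obstruction to extending directly is that $0$-morphisms are \emph{not} semigroup homomorphisms (they fail multiplicativity precisely on products that vanish), so the universal property of the amalgam does not apply immediately.

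To get around this I would use the McAlister construction of \cite{mcalister} recalled above. From $\psi_S,\psi_T,\psi_U$ form the honest inverse monoids $\hat S\subseteq S\times H$, and similarly $\hat T$ and $\hat U$; their idempotent semilattices coincide with those of $S,T,U$, so $\hat U$ embeds in $\hat S$ and $\hat T$ (and is \emph{full} in them when $U$ is full in $S$ and $T$, placing us in the setting of \cite{HMM}). The first- and second-coordinate projections are genuine homomorphisms $p_1\colon\hat S\to S$ and $p_2\colon\hat S\to H$, and likewise for $T,U$, compatible over $\hat U$. The ordinary universal property of $\hat S\amal{\hat U}\hat T$ then produces honest homomorphisms $P_1\colon\hat S\amal{\hat U}\hat T\to S\amal{U} T$ (surjective) and $P_2\colon\hat S\amal{\hat U}\hat T\to H$. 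I would define $\Phi(0)=0$ and $\Phi(x)=P_2\bigl(P_1^{-1}(x)\bigr)$ for $x\neq0$; granting well-definedness, the fact that $P_2$ is an honest homomorphism makes the verification that $\Phi$ is a $0$-morphism restricting to $\psi_S,\psi_T$ routine, and checking $\eta\rho=\mathrm{id}$ and $\rho\eta=\mathrm{id}$ on the generating sets $S$ and $T$ then completes the proof.

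The main obstacle is the well-definedness of $\Phi$, i.e.\ the structural lemma that $P_1$ carries $(\hat S\amal{\hat U}\hat T)\setminus J$ bijectively onto $(S\amal{U} T)\setminus\{0\}$, where $J$ is the ideal generated by the ``spread-out zeros'' $I_S,I_T\cong H$ and $J=P_1^{-1}(0)$. Over a nonzero element of $S$ (or of $T$) the fiber of $p_1$ is a single point, so the claim is that this uniqueness of lifts survives passage to the amalgam: each element of $\hat S\amal{\hat U}\hat T$ off $J$ should have a reduced alternating form whose letters lift, one-to-one, the letters of the reduced form of its image in $S\amal{U} T$, with the amalgamation relations over $\hat U$ projecting exactly onto those over $U$ (here the common zero in $U$ is what forces the ideals to match, yielding $J=P_1^{-1}(0)$). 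Establishing this needs the normal-form description of amalgams of inverse semigroups, explicit for full amalgams via the Bass--Serre analysis of \cite{HMM} and available in general from \cite{Bennett}; the delicate point, where I expect the real work, is controlling reductions in $\hat S\amal{\hat U}\hat T$ that project to products vanishing in $S\amal{U} T$ and verifying that $P_2$ is constant on each nonzero fiber.
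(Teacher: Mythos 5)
Your easy direction $\rho$ and the adjointness reduction are sound, but the proof has a genuine gap exactly at the point you flag as ``the real work'': the claim that $P_2$ is constant on each nonzero fiber of $P_1$ (equivalently, that $P_1$ maps $(\hat S\amal{\hat U}\hat T)\setminus J$ bijectively onto $(S\amal{U}T)\setminus\{0\}$). This claim is not an auxiliary lemma --- it \emph{is} the proposition: unwinding it, it says precisely that the value $\psi_S(s_1)\psi_T(t_1)\cdots\psi_S(s_n)\psi_T(t_n)\in H$ is unchanged under every rewriting of a word representing a nonzero element of $S\amal{U}T$, which is the well-definedness of the desired $0$-morphism $\Phi$. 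Deferring it to the normal-form machinery of \cite{HMM} and \cite{Bennett} does not close the gap, for two reasons. First, nothing is actually verified; you would still have to track how arbitrary words reduce to normal form and check invariance of the $H$-value along the way, which is the same computation in heavier clothing. Second, and more seriously, the cited machinery does not apply at the stated level of generality: Proposition~\ref{prop:universal} assumes only a common zero in $U$, with no fullness hypothesis, whereas the Bass--Serre analysis of \cite{HMM} is for \emph{full} amalgams and Bennett's structure theory covers lower bounded (e.g., full) amalgams, not arbitrary ones. So the route as described cannot be completed.

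For contrast, the paper's proof needs no structure theory at all: it defines $\gamma(s_1t_1\cdots s_nt_n)=\alpha(s_1)\beta(t_1)\cdots\alpha(s_n)\beta(t_n)$ on words and checks invariance under each \emph{defining} relation of the amalgam --- within-factor multiplication (a $0$-morphism is multiplicative on nonvanishing products, and no product inside a nonzero word can vanish since the zero is common), identification over $U$ (where $\alpha$ and $\beta$ agree), and the equational axioms of inverse semigroups, which collapse because every word of the form $ww^{-1}$ maps to $1_K$ in a group with zero. Your $\hat S$-cover framework is salvageable, but by the same token: instead of invoking normal forms, lift each elementary transition between words one at a time from $S\amal{U}T$ to $\hat S\amal{\hat U}\hat T$ (possible because words representing nonzero elements involve only nonzero letters and nonvanishing within-factor products, and lifting commutes with inversion and $U$-identification). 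That is exactly the technique the paper uses in the proof of Lemma~\ref{lemma:strongamal}, and it is the argument you would need to write out; as it stands, your proposal postpones the entire content of the proposition to a step that the cited tools cannot supply.
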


The analogous result for the maximal group images, i.e., that
\begin{equation}\label{eq:amalsigma}
(S \amal{U} T)/\sigma = (S/\sigma) \amal{U/\sigma} (T/\sigma),
\end{equation}
well-known and the proof strategy below is a natural adaptation of the
proof of~\eqref{eq:amalsigma}.
In fact, since $G(S^0)=S/\sigma$, Equation~\eqref{eq:amalsigma} follows from
Proposition~\ref{prop:universal}.

\begin{proof}
Let $A$ be the amalgam of $[S,T,U]$ in the category of inverse monoids
and let $K$ be the amalgam of $[G(S),G(T),G(U)]$ in the category of groups.
We need to construct the $0$-morphism that is the dotted arrow in the following
diagram:

\begin{center}
\tikzstyle{every node}=[fill=white]
\begin{tikzpicture}
  \matrix (m) [matrix of math nodes, row sep=1.5em,
    column sep=1.5em]{
      & G(S)^0 &   & K^0 \\
    S &      & A & \\
      & G(U)^0 &   & G(T)^0 \\
    U &      & T & \\};
  \path[-stealth]
    (m-3-2) edge (m-1-2)
            edge (m-3-4)
    (m-1-2) edge (m-1-4)
    (m-3-4) edge (m-1-4)
    (m-2-3) edge [densely dotted] (m-1-4)
    (m-2-1) edge (m-1-2)
    (m-4-3) edge (m-3-4)
    (m-4-1) edge (m-3-2)
            edge (m-2-1)
        edge (m-4-3)
    (m-2-1) edge [-,line width=6pt,draw=white] (m-2-3)
            edge (m-2-3)
    (m-4-3) edge [-,line width=6pt,draw=white] (m-2-3)
            edge (m-2-3);
\end{tikzpicture}
\end{center}
Observe that the front square involves inverse semigroup morphisms, the back
square has group morphisms (with zeros added) and the diagonal arrows are
$0$-morphisms.

We have two $0$-morphisms: $\alpha : S \to K^0$, the composition of the maps
$S \to G(S)^0$ and $G(S)^0 \to K^0$, and $\beta : T \to K^0$ defined similarly.
We define a map $\gamma : A \to K^0$ by sending $0$ to $0$ and sending a non-zero
word $s_1 t_1 s_2 \cdots s_n t_n$, with $s_i \in S$, $t_i \in T$, to
\[ \alpha(s_1) \beta(t_1) \alpha(s_2) \cdots \alpha(s_n) \beta(t_n). \]
Notice that since the zero is common to both $S$ and $T$, if $s_1t_1\ldots t_n \ne 0$,
then no subword can equal zero, and so $\gamma(A\backslash\{0\})\subseteq K$.

To show that $\gamma$ is well-defined, we show that $\gamma$ respects the equations
that define an inverse semigroup, as given on page~\pageref{subsect:IS}.
It is easy to see that if $v$ and $w$ are non-zero words in the elements of $S$ and $T$
with $vw$ non-zero, then $\gamma(vw)=\gamma(v)\gamma(w)$.
So $\gamma$ respects the relations that impose associativity.
Since $\alpha$ and $\beta$ agree on $U$, the image under $\gamma$ of a word does not depend
on how we regard an element of $U$ as in either $S$ or $T$.

If $w$ is a word in the non-zero elements of $S$ and $T$, let $w^{-1}$
be the word in the inverse elements, written in reverse order.
This is clearly an involution on such words.
It is easy to see that, for $w$ as above, $ww^{-1}= s_1 t_1 \cdots s_n t_n t_n^{-1} s_n^{-1} \cdots s_1^{-1}$.
Using $\beta(t_n t_n^{-1})=1_K$, the identity of $K$, and so on, we obtain
$\gamma( ww^{-1})=1_K$.
Thus $\gamma$ respects the equations that define the inverse semigroup $S\amal{U} T$
and so is a well-defined map.

We have already observed that $\gamma(vw)=\gamma(v)\gamma(w)$ for words $v$
and $w$ with $vw\ne 0$, so $\gamma$ is a $0$-morphism.
By the construction of $\gamma$, the two squares, one involving $S$, $A$, $G(S)^0$,
and $K^0$, and the other involving $T$, $A$, $G(T)^0$, and $K^0$, each commute.

We will show that $(K,\gamma)$ is the universal group for $A$.
Suppose that $\psi : A \to H^0$ is a $0$-morphism, where $H$ is some group.
We have $0$-morphisms from $S$ to $H^0$, and from $T$ to $H^0$, given by
composition of $\psi$ with the maps in the pushout diagram.
and these maps agree on $U$.
By the universal properties of $G(S)$ and $G(T)$, we have group morphisms $G(S) \to H$
and $G(T) \to H$ that agree on $G(U)$.
By the universal property of $K$, these maps give a map $\tau: K \to H$.
Using the commuting triangles and squares, $\tau^0 \circ \gamma$ agrees with $\psi$
when restricted to either $S$ or $T$.
Since $A$ is the amalgam of $S$ and $T$, it follows that $\psi=\tau^0 \circ \gamma$.

Suppose that $\sigma : K \to H$ is another $0$-morphism satisfying
$\sigma^0\circ\gamma = \psi = \tau^0 \circ \gamma$.
Since $\sigma$ and $\tau$ agree on $\gamma(A)$ in $K^0$, they agree on the images
of $S$ and $T$ under $\gamma$ composed with the natural inclusions.
By the universal properties of $G(S)$ and $G(T)$, $\sigma$ and $\tau$ agree
on the images of $G(S)$ and $G(T)$ in $K$.
But these images determine maps on $K$, and so $\sigma=\tau$.
\end{proof}

The following fact was proved by Bennett.

\begin{prop}[\protect{\cite[Corollary~9]{Bennett2}}]\label{prop:E-unitary}
Let $U$ be a full unitary inverse submonoid of the inverse monoid
$S$. Then $S \amal{U} S$ is $E$-unitary iff $S$ is $E$-unitary.
\end{prop}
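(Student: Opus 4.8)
The plan is to use the standard characterization that an inverse semigroup $V$ is $E$-unitary exactly when the natural map $V \to V/\sigma$ onto its maximal group image is idempotent-pure, i.e.\ $\sigma$-equivalence to an idempotent forces membership in $E(V)$ \cite{Lawson}. Writing $V = S \amal{U} S$, $G = S/\sigma$ and $H = U/\sigma$, Equation~\eqref{eq:amalsigma} identifies $V/\sigma$ with the group amalgam $K = G \amal{H} G$. The easy direction needs no unitarity: if $V$ is $E$-unitary and $e \le a$ in $S$ with $e \in E(S)$, then applying the embedding $\psi_1 : S \to V$ (which exists by the strong amalgamation property \cite{Hall}) and using that homomorphisms preserve the natural partial order gives $\psi_1(e) \le \psi_1(a)$ with $\psi_1(e) \in E(V)$; hence $\psi_1(a) \in E(V)$, and injectivity of $\psi_1$ returns $a \in E(S)$.

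For the converse, assume $S$ is $E$-unitary, and write $\rho = \sigma_S : S \to G$. First I would record two consequences of the hypotheses that carry all the weight. First, $U$ is itself $E$-unitary: the composite $U \hookrightarrow S \to G$ is idempotent-pure since $\rho$ is, and it lands in a group, so $U/\sigma \isom H \le G$. Second, the following \emph{syllable lifting} holds: if $s \in S$ and $\rho(s) \in H$, then $s \in U$. Indeed, if $\rho(s)=\rho(u)$ with $u \in U$, then $\rho(su^{-1})=1$, so $E$-unitarity of $S$ gives $su^{-1}\in E(S)=E(U)\subseteq U$; as $u^{-1}\in U$, unitarity of $U$ applied to the product $s\cdot u^{-1}\in U$ yields $s\in U$. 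This is the one place where both hypotheses---$E$-unitarity of $S$ and unitarity of $U$---are used together.

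With these in hand, I would argue on normal forms. Using the normal-form theory for full inverse-semigroup amalgams \cite{Bennett}, every $w \in V$ has a reduced expression as an alternating product $w = a_1 a_2 \cdots a_n$ with consecutive syllables drawn from the two copies of $S$ and the interior syllables lying outside $U$. Suppose $\sigma_V(w) = 1_K$. Applying $\rho$ syllable by syllable sends $w$ to $\rho(a_1)\cdots\rho(a_n)$ in $K = G \amal{H} G$; by syllable lifting every interior $a_i \notin U$ satisfies $\rho(a_i) \notin H$, so this is a \emph{reduced} word in the group amalgam of the same positive length whenever $n \ge 1$. The normal-form theorem for amalgamated free products of groups then makes this product different from $1_K$, a contradiction. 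Hence $w$ reduces to a single element of the amalgamated copy of $U$; since $U$ is $E$-unitary and $\sigma_V$ restricts on $U$ to the idempotent-pure map $U \to H$, we conclude $w \in E(U) \subseteq E(V)$, so $\sigma_V$ is idempotent-pure and $V$ is $E$-unitary.

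The main obstacle is the third paragraph: matching the reduction calculus for inverse-semigroup amalgams of \cite{Bennett} to the clean ``alternating product, interior syllables outside $U$'' picture, and checking that $\sigma_V$ genuinely carries a reduced form in $V$ to a reduced (hence nontrivial) word in $K$. Inverse amalgams carry idempotent context data and a partial order that group amalgams lack, so some care---most naturally via Lemma~\ref{lemma:specialH}, which lets one normalize the $\theta$-twists relating the two copies---is needed to verify that no interior syllable is silently absorbed into $U$ during reduction, and to handle end syllables. Once that bookkeeping is in place, the group-theoretic normal form does the rest.
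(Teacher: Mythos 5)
The paper offers no proof of this proposition: it is imported verbatim from Bennett (\cite[Corollary~9]{Bennett2}), so your argument is necessarily an independent route, and in essence it works. Your pivot is the syllable-lifting lemma ($\rho(s)\in H$ forces $s\in U$), and your proof of it is correct and correctly isolates where all three hypotheses interact: $E$-unitarity of $S$ gives $su^{-1}\in E(S)$, fullness gives $E(S)=E(U)\subseteq U$, and unitarity of $U$ then returns $s\in U$. Two repairs are needed, both minor. First, your normal form is simultaneously looser and more heavily armed than necessary. Take a representative $w=a_1\cdots a_n$ of \emph{minimal} length; then an elementary merging argument shows that for $n\ge 2$ \emph{every} syllable (not just interior ones) lies outside $U$ — adjacent syllables in the same factor concatenate, and a syllable in $U$ can be read into the other copy of $S$ and absorbed by a neighbor, since $U$ is common to both copies. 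As literally written, with only interior syllables excluded from $U$, your claim that the image is a reduced word ``whenever $n\ge 1$'' fails (an end syllable in $U$ lands in $H$); with all syllables outside $U$ it is immediate, and the group normal form theorem kills $\sigma_V(w)=1_K$ for $n\ge 2$. Note that this requires no part of Bennett's reduction calculus, so the obstacle you flag in your last paragraph is not actually there: since $\sigma_V(w)$ is independent of the chosen representative, the \emph{existence} of one good representative is all you need, and uniqueness or word-problem questions (where Bennett's machinery and Lemma~\ref{lemma:specialH} would enter) never arise. Second, in the endgame, $n=1$ gives $w=a_1$ in a copy of $S$ with $\rho(a_1)=1$ (using injectivity of $G\to K$), so $a_1\in E(S)$ directly by $E$-unitarity of $S$; routing through ``the amalgamated copy of $U$'' is a harmless but unnecessary detour. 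With these patches your proof is complete and is in fact more general than the quoted statement: nothing uses the special-amalgam structure $\theta$, so you have shown that $S_1\amal{U}S_2$ is $E$-unitary whenever $S_1,S_2$ are $E$-unitary inverse monoids and $U$ is a common full unitary inverse submonoid.
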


If $\phi : S \to G(S)^0$ above also satisfies
$\phi^{-1}(1_G) = E(S)-\{0_S\}$, then we say that $S$ is {\it
strongly $E^*$-unitary}. Strongly $E^*$-unitary inverse semigroups
are precisely Rees quotients of $E$-unitary inverse semigroups;
see Section 3 of \cite{mcalister}.
Such semigroups are $E^*$-unitary, but there are $E^*$-unitary inverse
semigroups that are not strongly $E^*$-unitary ~\cite[p.~22]{BFG}.
We refer the reader to Lawson's book~\cite{Lawson} and
his paper~\cite{Lawson01} for more information about these concepts
and the important role that they play in the theory of inverse semigroups.

We use Bennett's result to establish the following fact about special
amalgams of strongly $E^*$-unitary inverse semigroups.

\begin{lemma}\label{lemma:strongamal}
Let $S$ be a strongly $E^*$-unitary inverse semigroup with semilattice
$E = E(S)$. Then $S \amal{E} S$ is strongly $E^*$-unitary.
\end{lemma}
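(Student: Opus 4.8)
The plan is to produce an idempotent-pure $0$-morphism from $S \amal{E} S$ to a group-with-zero, which by definition exhibits $S \amal{E} S$ as strongly $E^*$-unitary. The target group can be identified immediately: since $E$ is a semilattice, every $0$-morphism collapses it, so $G(E)$ is trivial, and Proposition~\ref{prop:universal} gives $G(S \amal{E} S) = G(S) \amal{\{1\}} G(S)$, the free product $G(S)\ast G(S)$. Write $\Gamma : S \amal{E} S \to (G(S)\amal{\{1\}} G(S))^0$ for the associated universal $0$-morphism; by the construction in the proof of Proposition~\ref{prop:universal}, its restriction to each copy of $S$ is $\phi$ followed by the inclusion of the corresponding free factor. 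Because a $0$-morphism sends every nonzero idempotent to $1$, the inclusion $E(S\amal{E}S)\setminus\{0\}\subseteq \Gamma^{-1}(1)$ is automatic; the whole content lies in the reverse inclusion.

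To obtain the reverse inclusion I would pass to an $E$-unitary cover. Since $S$ is strongly $E^*$-unitary there is an idempotent-pure $0$-morphism $\phi : S \to G(S)^0$, so $\phi^{-1}(1)=E\setminus\{0\}$, and the construction $\hat{S}$ of \cite{mcalister} recalled above yields an inverse monoid with $E(\hat{S})=E$ and Rees quotient $\hat{S}/I \cong S$. The fibre over $1$ of the projection $(s,g)\mapsto g$ is exactly $E(\hat{S})$, so $\hat{S}$ is $E$-unitary with maximal group image $G(S)$. As $E$ is a full and (being the idempotents of an $E$-unitary monoid) unitary inverse submonoid of $\hat{S}$, Proposition~\ref{prop:E-unitary} applies and shows $\hat{S} \amal{E} \hat{S}$ is $E$-unitary. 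By Equation~\eqref{eq:amalsigma} its maximal group image is $G(S)\amal{\{1\}}G(S)$, so the $\sigma$-morphism $Q : \hat{S} \amal{E}\hat{S} \to G(S)\amal{\{1\}}G(S)$ is idempotent-pure, that is, $Q^{-1}(1)=E(\hat{S} \amal{E} \hat{S})$.

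The final step links the two amalgams. Composing the Rees quotient $\hat{S} \to S$ with the two canonical maps $S\to S\amal{E}S$ gives morphisms that agree on $E$, hence induce a surjective homomorphism $\pi : \hat{S} \amal{E}\hat{S} \to S \amal{E} S$. On generators one checks $\Gamma(\pi(x))=Q(x)$ whenever $\pi(x)\neq 0$, and since $S$ and $S\amal{E} S$ share a single zero, no proper subword of a nonzero element is zero; therefore $\Gamma(\pi(w))=Q(w)$ for every $w$ with $\pi(w)\neq 0$. Now take $v\neq 0$ in $S\amal{E} S$ with $\Gamma(v)=1$, choose $w$ with $\pi(w)=v$, and note $Q(w)=\Gamma(\pi(w))=\Gamma(v)=1$, so $w$ is idempotent by the $E$-unitarity of $\hat{S} \amal{E} \hat{S}$; then $v=\pi(w)$ is idempotent as the homomorphic image of an idempotent. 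This gives $\Gamma^{-1}(1)\subseteq E(S\amal{E} S)\setminus\{0\}$ and completes the proof.

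I expect the main obstacle to be the bookkeeping in this last step: comparing $\Gamma\circ\pi$ with $Q$ requires care because $\hat{S} \amal{E} \hat{S}$ has no zero whereas $S\amal{E} S$ does, so the identity $\Gamma\circ\pi=Q$ holds only off $\pi^{-1}(0)$ and relies on the common-zero observation (from the proof of Proposition~\ref{prop:universal}) that nonzero words have no zero subwords. Morally this says that $S\amal{E} S$ is a Rees quotient of the $E$-unitary semigroup $\hat{S} \amal{E} \hat{S}$, which would yield the result directly via McAlister's characterization of strongly $E^*$-unitary semigroups; the argument above extracts exactly the idempotent-purity needed without having to verify that $\pi$ is injective off its zero ideal, which would otherwise require the normal-form theory for full amalgams.
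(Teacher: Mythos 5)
Your proof is correct, and its core coincides with the paper's: both pass to McAlister's $E$-unitary cover $\hat{S}$ of $S$ and apply Bennett's Proposition~\ref{prop:E-unitary} to conclude that $\hat{S} \amal{E} \hat{S}$ is $E$-unitary. Where you genuinely diverge is the descent from $\hat{S} \amal{E} \hat{S}$ to $S \amal{E} S$. The paper identifies $S \amal{E} S$ with the Rees quotient $(\hat{S} \amal{E} \hat{S})/J$, where $J$ is the ideal of words whose first components multiply to zero in $S$; this identification needs Lemma~\ref{lemma:specialH} (to see that a word is nonzero in $S \amal{E} S$ iff the corresponding product of elements of $S$ is nonzero) together with an argument lifting elementary transitions, and it then cites McAlister's characterization of strongly $E^*$-unitary semigroups as Rees quotients of $E$-unitary ones. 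You instead use only the (easy) surjectivity of $\pi : \hat{S} \amal{E} \hat{S} \to S \amal{E} S$, verify the intertwining $\Gamma \circ \pi = Q$ away from $\pi^{-1}(0)$ between the universal $0$-morphism $\Gamma$ furnished by Proposition~\ref{prop:universal} and the maximal group image map $Q$ of the cover's amalgam, and pull idempotent-purity back through $\pi$. This trades structural information for economy: the paper's route actually exhibits $S \amal{E} S$ as a Rees quotient of an $E$-unitary semigroup (which is what McAlister's characterization records), whereas your route never needs injectivity of $\pi$ off its kernel ideal nor Lemma~\ref{lemma:specialH}, at the cost of invoking Proposition~\ref{prop:universal} and Equation~\eqref{eq:amalsigma}, which the paper's proof of this lemma does not use. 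Both arguments share the same mild looseness of applying Propositions~\ref{prop:E-unitary} and~\ref{prop:universal}, stated for inverse monoids, to a semigroup $S$, so that is not a defect peculiar to yours.
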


\begin{proof}
With $\hat{S}$ as above, it follows that $\hat{S}$ is an $E$-unitary cover
of $S$ (i.e., it is $E$-unitary and the natural map that sends $(s,g)$ to $s$
if $s\neq 0$ and $(0,g)$ to $0$ is an idempotent-separating map
from $\hat{S}$ onto $S$).
{}From Proposition \ref{prop:E-unitary}, it follows that
$\hat{S} \amal{E} \hat{S}$ is $E$-unitary.

Let $\theta: S \to S$ be the isomorphism in the construction of
the special amalgam $S \amal{E} S$.
Every non-zero element of $S \amal{E} S$ may be expressed (not
uniquely) in the form $s_1\theta(t_1)s_2\theta(t_2) \ldots
s_n\theta(t_n)$ for some non-zero elements $s_i,t_i \in S$. From
Lemma \ref{lemma:specialH} it follows that
$s_1\theta(t_1)s_2\theta(t_2) \ldots s_n\theta(t_n) \neq 0$ in $S
\amal{E} S$ iff $s_1t_1s_2t_2 \ldots s_nt_n \neq 0$ in $S$. Also,
any sequence of elementary transitions that transforms a non-zero
element $s_1t_1 \ldots s_nt_n$  to an equivalent element $s'_1t'_1
\ldots s'_mt'_m$ in $S \amal{E} S$ may be replaced by an obvious
sequence that transforms the corresponding elements in $\hat{S}
\amal{E} \hat{S}$.
We use $J$ for the set of elements in $\hat{S} \amal{E} \hat{S}$
of the form
\begin{equation}\label{eq:1}
 (s_1,\phi(s_1))(\theta(t_1), \phi(\theta(t_1))) \ldots
    (s_n,\phi(s_n))(\theta(t_n), \phi(\theta(t_n)))
\end{equation}
where $s_1t_1 \ldots s_nt_n = 0$ in $S$.
Clearly $J$ is an ideal of $\hat{S} \amal{E} \hat{S}$.
Consider the map that projects an element~\eqref{eq:1}
of $\hat{S} \amal{E} \hat{S}$
onto its first component $s_1\theta(t_1) \ldots
s_n\theta(t_n)$ if it is not in $J$
and to $0$ if
it is in $J$.
By the observations above, this sends $\hat{S} \amal{E} \hat{S}$
onto $S \amal{E} S$, and $S \amal{E} S \cong (\hat{S} \amal{E} \hat{S})/J$.
Since $\hat{S} \amal{E} \hat{S}$ is $E$-unitary,
it follows that $S \amal{E} S$ is strongly
$E^*$-unitary~\cite{mcalister}.
\end{proof}

\section{Examples} \label{S:calgex}

We now use our main result, Theorem~\ref{thm:Cstaramal},  and the
results of the previous section to describe the special amalgams
of various $\cstar$-algebras.

\subsection*{The Toeplitz $\cstar$-algebra}
The Toeplitz $\cstar$-algebra, which we denote $\sT$, is the
$\cstar$-subalgebra of $B(\ell^2)$ generated by the unilateral shift $S$;
see, for example, \cite[Section~V.1]{Davidson}.
It can be identified with $\cstar$-algebra of the bicyclic monoid $B$,
that is, the inverse monoid
generated by an element $a$ subject to the relation $aa^{-1}=1$, with
the semigroup homomorphism $B \to \sT$ determined by $a \mapsto S$.
The semilattice of idempotents $E = E(B)$ is a chain order-isomorphic
to the negative integers under the usual ordering.
For each element $t = a^{-i}a^j \in B$, there are only finitely
many elements of $s \in B$ such that $t \leq s$.
{}From \cite[Theorem 5.4.4]{Lawson} it is easy to see that the
non-trivial unitary full inverse submonoids of $B$ are $E(B)$ and
submonoids of the form $B(n) = \{1\} \cup \{a^{-i}a^{j} :
 i+j \equiv 0 \mod n\}$ for $n \geq 2$.
The submonoid $E(B)$ has infinitely many $\sD$-classes, while
the submonoid $B(n)$ has $n$ $\sD$-classes.

We write $\sD$ for $\cstar(E)$, the subalgebra generated by the
diagonal matrices in $\cstar(B)$.
It is isomorphic to the algebra of convergent sequences of complex numbers.
Further, $\cstar(B(n))$ can be described in several ways.  Perhaps
the simplest is the $\cstar$-subalgebra of $\sT$ generated by $S^n$ and the
$n-1$ minimal projections onto the first $n-1$ basis vectors
in $\ell^2$.

If $\sE \subset \sT$ is a $\cstar$-subalgebra of $\sT=\cstar(B)$
generated by a full submonoid $U$ of $B$, then
by Theorem \ref{thm:Cstaramal},
\[ \sT \amal{\sE} \sT = \cstar(B \amal{U} B). \]
To describe this $\cstar$-algebra, we study the
inverse semigroup structure of $B \amal{U} B$.
By Proposition~\ref{prop:E-unitary}, $B \amal{U} B$ is $E$-unitary.
For each full inverse submonoid $U$ of $B$ the semigroup $B
\amal{U} B$ is a Reilly semigroup of the form $B(G,\alpha)$ where
$G$ is the maximal subgroup of $B \amal{U} B$ containing $1$ and
$\alpha$ is some endomorphism of $G$.
The endomorphism $\alpha$ is injective since $B \amal{U} B$ is $E$-unitary.
{}From the results of~\cite{HMM}, $G$ is $F_{\infty}$, the
free group of infinite rank, if $U = E(B)$, and to $F_{n-1}$, the
free group of rank $n-1$, if $U = B(n)$.
To see this, note that the graph has two vertices (as each copy of
$B$ has one $\sD$-class) and either infinitely many edges (if $U=E$)
or $n$ edges (if $U=B(n)$); adapting the discussion before
Theorem~\ref{thm:FD} to this context gives $F_\infty$ or $F_{n-1}$,
respectively.

See \cite[Section~II.6]{Petrich} for details of structure of $B(G,\alpha)$.
Briefly, elements of $B \amal{U} B$ may be identified with triples $(i,g,j)$
where $i,j$ are positive integers and $g \in G$, with multiplication
\[ (i,g,j)(k,h,l) = \begin{cases} (i+k-j,\alpha^{k-j}(g)h,l) & \hbox{if $k \ge j$,} \\
    (i,g \alpha^{j-k}(h),l+j-k) & \hbox{if $j \ge k$.} \end{cases}
\]
An element $(i,g,j)$ of $B \amal{U} B$ can only be less than or equal
to elements of the form $(i-k,h,j-k)$ where $h \in G$ and $k \in \bN$
satisfy $i-k,j-k \geq 0$ and $\alpha^k(h)=g$.
Since $\alpha$ is injective, there is at most one such $h$.
Thus each element of $B \amal{U}
B$ has only finitely many elements above it in the natural partial
order since it is an $E$-unitary inverse semigroup.
We note that $B \amal{U} B$ is $F$-inverse, that is, each element
has a unique maximal element above it in the natural partial order.
To see this, suppose that
$(i,g,j) \leq (i-k,h,j-k), (i-l,h',j-l)$ where $l < k$.
Then $\alpha^k(h)=\alpha^l(h')$.
By the injectivity of $\alpha$, $\alpha^{k-l}(h)=h'$ and so
$(i-l,h',j-l) \leq (i-k,h,j-k)$.
It follows that $B \amal{U} B$ is $F$-inverse.

By results of Khoshkam and Skandalis~\cite{KS} (cf.~\cite{Steinberg}),
$\cstar(B \amal{U} B)$ is strongly Moria equivalent to $\cstar(E) {\times}_{\mu} H$,
a crossed product of the abelian $\cstar$-algebra $\cstar(E)$
by $H$, the maximal group homomorphic image of $B \amal{U} B$.
By Proposition~\ref{prop:universal} (or, more precisely, by Equation~\eqref{eq:amalsigma}),
if $U=E(B)$, then $G(E(B))=\{0\}$ and
$H = \bZ \amal{\{0\}} \bZ = F_2$, while if $U=B(n)$, then $G(B(n))$ is $\bZ$,
which we can identify as $n\bZ$ inside $\bZ \cong G(B)$, and so
$H = \bZ \amal{n\bZ} \bZ = \langle a,b \,|\, a^n=b^n \rangle$.
In each case, $H$ is also a semidirect product of $G$ by $\bZ$, from \cite{MR}.

To describe the action $\mu$ of $H$ on $\cstar(E)$, we start with the
Munn representation, that is,
$s \in S$ maps the set $\{ e \in E : e \le s^*s\}$ onto the set
$\{ e \in E : e \le ss^* \}$, via $e \mapsto ses^*$.

If $\hat{E}$ is the spectrum of $\cstar(E)$, then $C(\hat{E})$, the
continuous functions on $\hat{E}$, is isomorphic to $\cstar(E)$.
Moreover, $\hat{E}$ can be identified with the multiplicative linear
functionals on $E$ with the relative weak-$*$ topology.
There is a dual action of $S$ on $\hat{E}$, where $s \in S$ maps
$\{ x \in \hat{E} : x(s^*s)=1 \}$ onto $\{ x \in \hat{E} : x(ss^*)=1 \}$ via
$x \mapsto s.x$, where $s.x(e)=x(s^*es)$ for all $e \in E$.

This lifts to an action, also called $\mu$, of $S/\sigma$ on $\hat{E}$,
given by $g.x=s.x$ for any $s \in S$ with $\sigma(s)=g$ and $x(s^*s)=1$.
To see that this is well-defined, note that if $f \in E$ and $x(f)=1$, then
$s.x=(sf).x$.
By~\cite[Lemma 1.4.12]{Lawson}, for $s,t \in S$ with $\sigma(s)=\sigma(t)$,
$f=s^*st^*t$ satisfies $sf=tf$ and so $s.x=(sf).x=(tf).x=t.x$.

We summarize this discussion in the following theorem.

\begin{thm}\label{thm:Toeplitz}
If $\sD$ is the diagonal matrices in $\sT$ and $\sE=\cstar(B(n))$,
then $\sT \amal{\sD} \sT$ and $\sT \amal{\sE} \sT$ are strongly
Morita equivalent to, respectively,
\[ \sA \times_\mu F_2, \qquad \sA \times_\mu \langle a,b \,|\, a^n=b^n \rangle, \]
where $\mu$ is the action described above and $\sA$ is the algebra
of convergent sequences of complex numbers.
\end{thm}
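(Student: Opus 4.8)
The plan is to route everything through the inverse-semigroup picture and then invoke the crossed-product description of Khoshkam and Skandalis. Since $\sD = \cstar(E(B))$ and $\sE = \cstar(B(n))$, and both $E(B)$ and $B(n)$ are full inverse submonoids of the bicyclic monoid $B$, the special amalgams $[B,B,E(B)]$ and $[B,B,B(n)]$ are full, so Theorem~\ref{thm:Cstaramal} gives
\[ \sT \amal{\sD} \sT \isom \cstar(B \amal{E(B)} B), \qquad \sT \amal{\sE} \sT \isom \cstar(B \amal{B(n)} B). \]
Thus it suffices to analyze $\cstar(B \amal{U} B)$ for $U \in \{E(B), B(n)\}$, and this I would do using the structure already established above: each $B \amal{U} B$ is the Reilly semigroup $B(G,\alpha)$ with $G$ free (of rank $\infty$ when $U = E(B)$ and of rank $n-1$ when $U = B(n)$) and $\alpha$ injective, each is $E$-unitary by Proposition~\ref{prop:E-unitary}, and a direct computation on the triples $(i,g,j)$, using injectivity of $\alpha$, shows each is in fact $F$-inverse.

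Next I would identify the three data of the target crossed product. The coefficient algebra is $\cstar(E)$, where $E = E(B \amal{U} B) = E(B)$ is the chain order-isomorphic to the negative integers, so $\cstar(E) = \sA$, the convergent sequences. The acting group is the maximal group homomorphic image $H = (B \amal{U} B)/\sigma$; by Equation~\eqref{eq:amalsigma} this is $(B/\sigma) \amal{U/\sigma} (B/\sigma)$, and feeding in $B/\sigma = \bZ$ together with $E(B)/\sigma = \{1\}$ and $B(n)/\sigma \isom \bZ$ sitting in $\bZ$ as $n\bZ$ produces
\[ H = \bZ \amal{\{1\}} \bZ = F_2 \quad\text{and}\quad H = \bZ \amal{n\bZ} \bZ = \langle a,b \mid a^n = b^n \rangle, \]
respectively. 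The action $\mu$ is the dual of the Munn representation, descended to $H$ as constructed above; its well-definedness is exactly the computation using \cite[Lemma~1.4.12]{Lawson}, and the $F$-inverse property is what lets this descend to a genuine action of the group $H$, rather than leaving us, as in the $E^*$-unitary graph case, with only a partial crossed product.

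With these identifications in hand, I would apply Khoshkam--Skandalis~\cite{KS} (cf.~\cite{Steinberg}): because $B \amal{U} B$ is $E$-unitary, $\cstar(B \amal{U} B)$ is strongly Morita equivalent to the crossed product $\cstar(E) \times_\mu H$. Substituting gives precisely that $\sT \amal{\sD} \sT$ and $\sT \amal{\sE} \sT$ are strongly Morita equivalent to $\sA \times_\mu F_2$ and $\sA \times_\mu \langle a,b \mid a^n = b^n\rangle$, as claimed.

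The main obstacle is this last step together with the correct description of $\mu$: one must check the hypotheses of the Khoshkam--Skandalis theorem for these particular semigroups and confirm that the output is a crossed product by the group $H$, not merely a groupoid or partial crossed product. This is where the $F$-inverse property, verified above, does the essential work, ensuring that the partial dual action of $B \amal{U} B$ on $\hat E$ assembles into an action of $H$. A subsidiary point is to confirm that the Morita equivalence is at the level of the full (universal) $\cstar$-algebra and crossed product, which one reads off from the amenability of the associated groupoid built into the cited form of the result.
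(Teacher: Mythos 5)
Your proposal is correct and takes essentially the same route as the paper: reduce via Theorem~\ref{thm:Cstaramal} to $\cstar(B \amal{U} B)$, show $B \amal{U} B$ is an $E$-unitary (indeed $F$-inverse) Reilly semigroup $B(G,\alpha)$, compute $H=(B\amal{U}B)/\sigma$ from Equation~\eqref{eq:amalsigma} as $F_2$ or $\langle a,b \mid a^n=b^n\rangle$, and apply Khoshkam--Skandalis to get strong Morita equivalence with $\cstar(E)\times_\mu H = \sA\times_\mu H$. The only difference is cosmetic: you spell out that the $F$-inverse property is what makes the Khoshkam--Skandalis hypotheses hold, a point the paper records but leaves implicit.
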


\subsection*{Toeplitz graph $\cstar$-algebras}
Inverse semigroups associated to graphs have been defined independently
several times: \cite{AH}, \cite[Section~8.1]{Lawson99},
and \cite{Paterson02}.
We think of a (directed) graph $\Gamma$ as having a set of vertices, $\Gamma^0$, a set
of edges, $\Gamma^1$, and range and source functions $r,s:\Gamma^1\to\Gamma^0$; where the
edge $e$ goes from $s(e)$ to $r(e)$.
Define $I(\Gamma)$, the inverse semigroup associated to $\Gamma$, as
the inverse semigroup generated by $\Gamma^0 \cup \Gamma^1$ with a zero $z \notin \Gamma^0 \cup \Gamma^1$,
subject to certain relations.
Here, we use $^*$ for the inverse operation.
If we extend the source and range maps of $\Gamma^1$ to $\{ e^* : e \in \Gamma^1\}$ by
$s(e^*)=r(e)$ and $r(e^*)=s(e)$ and to $\Gamma^0$ by $s(v)=r(v)=v$, then these relations
can be conveniently summarized as
\begin{enumerate}
\item $s(e)e=e r(s)=e$ for all $e\in \Gamma^1 \cup \{ e^* : e \in \Gamma^1 \}$,
\item $ab=z$ if $a,b \in \Gamma^0 \cup \Gamma^1\cup \{ e^* : e \in \Gamma^1 \}$ with $r(a)\ne s(b)$, and
\item $a^*b=z$ if $a,b \in \Gamma^1$ and $a \ne b$.
\item $b^*b=r(b)$ if $b \in \Gamma^1$.
\end{enumerate}
Define a path in $\Gamma$ to be either a vertex, $v$, or a finite sequence of edges
$\alpha=e_1 e_2 \cdots e_n$ with $r(e_i)=s(e_{i+1})$, $1 \le i < n$.
For such a path $\alpha$, we use $\alpha^*$ for $e_n^* e_{n-1}^* \cdots e_1^*$.
Extending $s$ and $r$ to paths by $s(\alpha)=s(e_1)$ and $r(\alpha)=r(e_n)$,
there is a natural composition of paths: the product of
$\alpha$ and $\beta$ is $\alpha\beta$ if $r(\alpha)=s(\beta)$ and is $z$ otherwise.

Relations~(1) and~(2) show that any word in $\Gamma^0 \cup \Gamma^1$ must be a path
and any word in $\Gamma^0 \cup \{ e^* : e \in \Gamma^1 \}$ is $p^*$ where $p$ is a path.
Using Relation~(3), it follows that each non-zero element of $I(\Gamma)$ has the
form $pq^*$ where $p$ and $q$ are paths with $r(p)=r(q)$; further,
the product of $pq^*$ and $rs^*$ is non-zero exactly when either $q=rt$ for a path $t$
or $r=qt$ for a path $t$.  The product is either $(pt)s^*$ or $p(st)^*$, respectively.
The idempotents of $I(\Gamma)$ are the elements of the form $pp^*$ for $p$ a path.
The natural order in $I(\Gamma)$ is given by $pq^* \le rs^*$ exactly when
$p=rt$ and $q=st$ for a path $t$.

It worth observing that when $\Gamma$ is a vertex with a single edge, then $I(\Gamma)$
is the bicyclic monoid adjoin a (removable) zero, while if $\Gamma$ is a vertex
with $n$ edges, then $I(\Gamma)$ is the {\it polycyclic monoid}, that is, the monoid
generated by $n$ elements $a_1,a_2, \ldots , a_n$ subject to the relations
$a_i{a_i}^{-1} = 1, \, a_i{a_j}^{-1} = 0$ for $i \neq j$.
These monoids were introduced by Nivat and Perrot~\cite{Nivat-Perrot} in the context
of formal language theory: they were rediscovered by Renault~\cite{Renault} and are
often referred to as {\it Cuntz semigroups} in the operator algebra
literature.%~\cite{Paterson}.

Each graph inverse semigroup is $F^*$-inverse and strongly $E^*$-unitary with universal
group the free group on the edges of $\Gamma$, $F_{\Gamma^1}$~\cite{Lawson01}.
As $I(\Gamma)$ is strongly $E^*$-unitary, \cite{Milan} shows $\cstar_0(I(\Gamma))$
can be described as a partial crossed product of $\cstar_0(E(I(\Gamma)))$ by $F_{\Gamma^1}$.

This associated contracted $\cstar$-algebra is \textsl{not} the
$\cstar$-algebra of the graph, but rather the Toeplitz $\cstar$-algebra
of the graph, as defined in~\cite{FR}.
(Of course, the $\cstar$-algebra of the graph is a proper quotient of
$\cstar_0(I(\Gamma))$.)
To see this, first let $\pi : I(\Gamma) \to \cstar_0(I(\Gamma))$ be the canonical injection
of $I(\Gamma)$ in its $\cstar$-algebra and define, for $v \in \Gamma^0$, $P_v=\pi(v)$ and for
$e \in \Gamma^1$, $S_e=\pi(e)$.
Then the relations above imply that $(\{ P_v : v \in \Gamma^0\}, \{ S_e : e \in \Gamma^1\})$
form a Toeplitz-Cuntz-Krieger $\Gamma$-family and moreover, $P_v \ne 0$ for each $v$
and, if $s^{-1}(v)$ is finite, $P_v > \sum_{s(e)=v} S_e s_e^*$.
Thus, by \cite[Corollary~4.2]{FR}, $C_0^*(I(G))=C^*(\{ P_v, S_e \})$ is the Toeplitz
$\cstar$-algebra of $\Gamma$.

The $\cstar$-subalgebra of $\cstar_0(I(\Gamma))$ generated by the idempotents,
call it $\sD$, is isomorphic to $C_0(X)$, the continuous functions vanishing at infinity
on a suitable locally compact, totally disconnected, Hausdorff space $X$.
The simplest way to describe $X$ is as the space of all finite or infinite paths on $\Gamma$,
with the following topology.
A finite path $\alpha$ is closed and open if $s^{-1}(r(\alpha))$ is finite and otherwise,
has a neighborhood basis, $D_{\alpha,F}$, indexed by finite subsets $F \subset s^{-1}(r(\alpha))$.
Each $D_{\alpha,F}$ consists of paths $\alpha \beta$ where $\beta$ is finite or infinite
path with $s(\beta)=r(\alpha)$ and the first edge of $\beta$ is not in $F$.
An infinite path $\alpha$ has a neighborhood base indexed by natural numbers $n$,
$D_{\alpha,n}$ consisting of paths $\beta$ whose first $n$ edges agree with $\alpha$
and the rest are can be any edges consistent with $\beta$ being a path.

Invoking Theorem \ref{thm:Cstaramal},
\[ \cstar_0(I(\Gamma)) \amal{\sD} \cstar_0(I(\Gamma)) \cong \cstar_0(I(\Gamma) \amal{E} I(\Gamma)).  \]
By Lemma~\ref{lemma:strongamal}, $I(\Gamma) \amal{E} I(\Gamma)$ is strongly $E^*$-unitary and
by Proposition~\ref{prop:universal}, its universal group is $F_{\Gamma^1} \amal{} F_{\Gamma^1}$.
Applying Milan's Theorem~\cite[Theorem 3.3.3]{Milan} again, we have

\begin{thm}\label{thm:graph}
Let $\Gamma$ be a directed graph.
If $\sD$ is the diagonal subalgebra of $\cstar(I(\Gamma))$, then
\[ \cstar_0(I(\Gamma)) \amal{\sD} \cstar_0(I(\Gamma)) \cong \sD \times_{\mu} H, \]
where $H=F_{\Gamma^1} \amal{} F_{\Gamma^1}$ and $\mu$ is the
partial action of $H$ on $\sD$ lifted from the Munn representation.
\end{thm}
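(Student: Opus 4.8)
The plan is to assemble the chain of isomorphisms foreshadowed in the discussion preceding the statement, checking that the hypotheses of each cited result are met.

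First I would apply the contracted form of Theorem~\ref{thm:Cstaramal} to the full amalgam $[I(\Gamma), I(\Gamma), E]$, where $E = E(I(\Gamma))$ is the common semilattice of idempotents. This is a full amalgam, since $E$ contains every idempotent of each copy, and $E$ contains the zero $z$, so the contracted isomorphism applies. Because $\sD = \cstar_0(E)$, this gives
\[ \cstar_0(I(\Gamma)) \amal{\sD} \cstar_0(I(\Gamma)) \cong \cstar_0\bigl(I(\Gamma) \amal{E} I(\Gamma)\bigr). \]

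Next I would verify that $I(\Gamma) \amal{E} I(\Gamma)$ meets the hypotheses of Milan's theorem. Since each graph inverse semigroup is strongly $E^*$-unitary, Lemma~\ref{lemma:strongamal} shows $I(\Gamma) \amal{E} I(\Gamma)$ is strongly $E^*$-unitary. Its semilattice of idempotents is again $E$, as no new idempotents arise in a full amalgam over the common semilattice. For the universal group, Proposition~\ref{prop:universal} gives $G(I(\Gamma) \amal{E} I(\Gamma)) = G(I(\Gamma)) \amal{G(E)} G(I(\Gamma))$. Here $G(I(\Gamma)) = F_{\Gamma^1}$, while $G(E)$ is trivial: any $0$-morphism sends a nonzero idempotent $e$ to an element satisfying $\phi(e)^2 = \phi(e)$ in a group, hence to the identity, and every nonzero element of a semilattice is idempotent. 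Thus the universal group is the free product $H = F_{\Gamma^1} \amal{} F_{\Gamma^1}$, as claimed.

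Finally I would invoke Milan's structure theorem for the contracted $\cstar$-algebra of a strongly $E^*$-unitary inverse semigroup, expressing $\cstar_0(I(\Gamma) \amal{E} I(\Gamma))$ as a partial crossed product of $\cstar_0(E) = \sD$ by its universal group $H$, with partial action $\mu$ arising from the dual (Munn) representation on the spectrum of $\sD$. Composing this with the isomorphism of the first step yields the theorem. The step requiring the most care is the identification of the coefficient data in Milan's theorem: one must confirm that the idempotent semilattice of the amalgam is exactly $E$, so that the coefficient algebra is $\sD$ and nothing larger, and that the partial action produced by Milan's construction is precisely the Munn action $\mu$ described before Theorem~\ref{thm:Toeplitz}. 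Both follow from the strongly $E^*$-unitary structure already established, but the identification of $\mu$ with the Munn action deserves an explicit sentence.
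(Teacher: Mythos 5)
Your proposal is correct and follows essentially the same route as the paper: apply the contracted case of Theorem~\ref{thm:Cstaramal} to the full amalgam $[I(\Gamma),I(\Gamma),E]$, use Lemma~\ref{lemma:strongamal} and Proposition~\ref{prop:universal} to verify strong $E^*$-unitarity and compute the universal group as $F_{\Gamma^1} \amal{} F_{\Gamma^1}$, then invoke Milan's theorem. The details you add beyond the paper's argument --- the triviality of $G(E)$, the fact that $E(I(\Gamma) \amal{E} I(\Gamma)) = E$, and the identification of Milan's partial action with the Munn action $\mu$ --- are correct points that the paper leaves implicit.
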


In particular, this result applies to the bicyclic monoid, so we have two
descriptions of the amalgam of the Toeplitz algebra with itself, either
as a crossed product (up to strong Morita equivalence) or as a partial
crossed product (up to $*$-isomorphism).
The theorem also applies to the Cuntz-Toeplitz algebra, when $\Gamma$ is
a vertex with $n$ loops, describing the amalgam of this algebra with
itself as a partial crossed product by $F_{2n}$, the free group of rank $2n$.


\begin{thebibliography}{10}
\bibitem{AH} C. J. Ash and T. E. Hall, Inverse semigroups on
graphs, {\em Semigroup Forum} {\bf 11} (1975/76), no.~2, 140--145.
\bibitem{Bennett} P. Bennett, Amalgamated free products of
inverse semigroups, {\em J. Algebra} {\bf 198} (1997), no.~2, 499--537.
\bibitem{Bennett2} P. Bennett, On the structure of inverse semigroup amalgams,
{\em Inter. J. Algebra and Computation} {\bf 7}
(1997), no.~5, 577-604.
\bibitem{Blackadar} B. E. Blackadar, Weak expectations and nuclear $C\sp{\ast} $-algebras,
{\em Indiana Univ. Math. J.} {\bf 27} (1978), no.~6, 1021--1026.
\bibitem{Brown} L. G. Brown, Ext of certain free product $\cstar$-algebras,
{\em J. Operator Theory} {\bf 6} (1981), no.~1, 135--141.
\bibitem{BFG} S. Bulman-Fleming, J. Fountain\ and\ V. Gould,
Inverse semigroups with zero: covers and their structure, {\em J. Austral. Math. Soc. Ser. A}
{\bf 67} (1999), no.~1, 15--30.
\bibitem{CMP} A. Cherubini, J. Meakin and B. Piochi, Amalgams of
finite inverse semigroups, {\em J. Algebra} {\bf 285} (2005), no.~2, 706--725.
\bibitem{CP} A. H. Clifford\ and\ G. B. Preston,  The Algebraic Theory of
Semigroups, Vol. 1 \& 2, {\em Math. Surveys}, No. 7, American Math. Soc., 1961 \& 1967.
\bibitem{Cuntz} J. Cuntz, The $K$-groups for free products of $C\sp{\ast} $-algebras.
{\em Operator algebras and applications, Part I (Kingston, Ont., 1980)},  pp. 81--84,
Proc. Sympos. Pure Math., 38, Amer. Math. Soc., Providence, R.I., 1982.
\bibitem{Davidson} K. R. Davidson, $C\sp *$-Algebras by Example,
Amer. Math. Soc., Providence, RI, 1996.
\bibitem{DP} J. Duncan, A.L.T. Paterson, $\cstar$-algebras of inverse semigroups,
{\em Proc. Edinburgh Math. Soc. (2)}\, {\bf 28} (1985), no.~1, 41--58.
\bibitem{Fillmore} P. A. Fillmore, A User's Guide to Operator Algebras,
Wiley, New York, 1996.
\bibitem{FR} N. J. Fowler\ and\ I. Raeburn, The Toeplitz algebra of a Hilbert bimodule,
{\em Indiana Univ. Math. J.} {\bf 48} (1999), no.~1, 155--181.
\bibitem{SHDiss} Steven P. Haataja, {\em Amalgamation of Inverse Semigroups and Operator Algebras},
Ph.D. Dissertation, Univ. Nebraska---Lincoln, August, 2006.
\bibitem{HMM} S. Haataja,  S. W. Margolis,\ and\ J. Meakin, Bass-Serre theory for
groupoids and the structure of full regular semigroup amalgams,
{\em J. Algebra} {\bf 183} (1996), no.~1, 38--54.
\bibitem{Hall}  T. E. Hall, Free products with amalgamation of inverse semigroups,
{\em J. Algebra} {\bf 34} (1975), 375--385.
\bibitem{Howie} J. M. Howie, Semigroup amalgams whose cores are inverse semigroups,
{\em Quart. J. Math. Oxford (2)} {\bf 26} (1975), 23--45.
\bibitem{Howie2} J. M. Howie, Fundamentals of Semigroup Theory.
London Mathematical Society Monographs. New Series, 12. Oxford Science Publications.
The Clarendon Press, Oxford University Press, New York, 1995.
\bibitem{KS} M. Koshkam and G. Skandalis, Crossed products of $\cstar$-algebras by
groupoids and inverse semigroups, {\em J. Operator Theory} {\bf 51} (2004), no.~2, 255--279.
\bibitem{Lawson} M. V. Lawson, Inverse Semigroups: The Theory of Partial Symmetries,
World Scientific Publishing Co. Pte. Ltd., Singapore, 1998.
\bibitem{Lawson99} M. V. Lawson, Constructing inverse semigroups from category actions,
{\em J. Pure Appl. Algebra} {\bf 137} (1999), no.~1, 57--101.
\bibitem{Lawson01} M. V. Lawson, $E^*$-unitary inverse semigroups,
{\em ``Semigroups, Algorithms, Automata, and Languages" (Coimbra, 2001)}, 195--214,
World Scientific Publishing, River Edge, NJ, 2002.
\bibitem{MacLane} S. MacLane, Categories for the Working Mathematician, Second Edition,
Springer-Verlag, New York, 1998.
\bibitem{mcalister} D. B. McAlister, An introduction to $E\sp *$-unitary inverse
semigroups---from an old fashioned perspective, 133--150,
{\it Semigroups and languages}, World Sci. Publ., River Edge, NJ, 2004.
\bibitem{McC} K. McClanahan, $K$-theory for Partial Crossed Products by Discrete Groups,
{\em J. Funct. Anal.} {\bf 130} (1995), no.~1, 77--117.
\bibitem{Milan} D. Milan, {\em $\cstar$-algebras of Inverse Semigroups},
Ph.D. Dissertation, Univ. Nebraska---Lincoln, May, 2008.
\bibitem{MR} W. D. Munn and N. R. Reilly, Congruences on a
bisimple $\omega$-semigroup, {\em Proc. Glasgow Math. Assoc.} {\bf 7} (1966),
184-192.
\bibitem{Nivat-Perrot} M. Nivat and J-F. Perrot, Une g\'en\'eralisation
du mono\"ide bicyclique, {\em Comptes Rendus de l'Academie des
Sciences de Paris S\'er. A-B} {\bf 271} (1970), A824--A827.
\bibitem{Paterson02} A.L.T. Paterson, Graph inverse semigroups, groupoids and
their $C\sp \ast$-algebras, {\em J. Operator Theory} {\bf 48} (2002), no.~3, suppl.,
645--662.
\bibitem{Pedersen} G. K. Pedersen, Pullback and pushout constructions in $\cstar$-algebra
theory, {\em J. Funct. Anal.} {\bf 167} (1999), no.~2, 243--344.
\bibitem{Petrich} M. Petrich, Inverse Semigroups, Wiley, New York, 1984.
\bibitem{Renault} J. N. Renault, A Groupoid Approach to
$\cstar$-Algebras, Lecture Notes in Mathematics, Vol. 793,
Springer-Verlag, New York, 1980.
\bibitem{Sapir} M. V. Sapir, Algorithmic problems for amalgams of
finite semigroups, {\em J. Algebra} {\bf 229} (2000), no.~2, 514--531. Erratum: {\em J
Algebra} {\bf 232} (2000), no.~2, 767--785.
\bibitem{Steinberg} B. Steinberg, Strong Morita equivalence of inverse semigroups,
{\em Houston J. Math.}, to appear (arXiv:0901.2699).
\end{thebibliography}
\end{document}